\newcommand{\abs}[1]{{\left|#1\right|}}
\newcommand{\norma}[1]{{\left\Vert#1\right\Vert}}
\theoremstyle{definition}
\newtheorem{definizione}{Definition}[section]
\theoremstyle{plain}
\newtheorem{teorema}{Theorem}[section]
\newtheorem{lemma}[teorema]{Lemma}
\newtheorem{prop}[teorema]{Proposition}
\newtheorem{corollario}[teorema]{Corollary}
\theoremstyle{definition}
\newtheorem{esempio}{Example}[section]
\newtheorem{oss}[esempio]{Remark}
\newtheorem{open}[esempio]{Open problem}
\renewcommand{\div}{\text{div}}
\DeclareMathOperator{\R}{\mathbb{R}}
\DeclareMathOperator{\Om}{\Omega}
\title{Rigidity results for the $p$-Laplacian Poisson problem with Robin boundary conditions }
\author{Alba Lia Masiello*, Gloria Paoli}
\date{}
\newcommand{\Addresses}{{
\bigskip 
  \footnotesize

  \textit{E-mail address}, A.L.~Masiello (corresponding author): \texttt{albalia.masiello@unina.it} 
   \medskip

 \textsc{Dipartimento di Matematica e Applicazioni ``R. Caccioppoli'', Universit\`a degli studi di Napoli Federico II, Via Cintia, Complesso Universitario Monte S. Angelo, 80126 Napoli, Italy.}
 
  \medskip 
  
   \textit{E-mail address}, G.~Paoli: \texttt{gloria.paoli@fau.de} 
   
 \medskip
 
 \textsc{ Department of Data Science (DDS)
Chair in Dynamics, Control and Numerics (Alexander von Humboldt-Professorship),
Cauerstr. 11,
91058 Erlangen, Germany.}
 \medskip

 \par\nopagebreak 

}}
\begin{document}
\maketitle 
\begin{abstract}
Let $\Omega \subset \mathbb{R}^n$ be an open, bounded and Lipschitz set. We consider the Poisson problem for the $p-$Laplace operator associated to $\Omega$ with Robin boundary conditions. In this setting, we study the equality case in the Talenti-type comparison stated in \cite{AGM}. We prove that the equality is achieved only if $\Omega$ is a ball and both the function $u$ and the right hand side $f$ of the Poisson equation are radial.
\newline
\newline
\textsc{Keywords:} Robin boundary conditions, $p$-Laplace operator, rigidity result, Talenti comparison. \\
\textsc{MSC 2020:}  35J92, 35J25, 46E30.
\end{abstract}

\Addresses 

\section{Introduction} 
Symmetrization techniques in the context of qualitative properties of solutions to second-order elliptic boundary value problems are introduced by Talenti in \cite{T}. In this seminal paper, the author considers an open, bounded and Lipschitz set $\Omega\subset\mathbb{R}^n$, the ball $\Omega^\sharp$ with the same measure as $\Omega$ and the solutions $u$ and $v$ to the following problems 
\begin{equation}\label{didi}
    \begin{cases}
    -\Delta u_D=f \, &\text{in } \Om, \\
     u_D=0 & \text{on } \partial\Omega,
    \end{cases} \quad \qquad\quad
    \begin{cases}
    -\Delta v_D=f^\sharp \, & \text{in } \Om^\sharp, \\
   v_D=0 & \text{on } \partial\Omega^\sharp,
    \end{cases}
\end{equation}
where $f\in L^2(\Omega)$ is a positive function and $f^\sharp$ is its Schwarz rearrangement of $f$ (see Definition \ref{rear}). 
In this setting, Talenti proves the following point-wise estimate:
\begin{equation}\label{tal}
    u_D^\sharp(x) \le v_D(x), \quad \text{ for all } x\in\Omega^\sharp.
\end{equation}
For sake of completeness, we observe that this result is proved more generally for a uniformly elliptic linear operator in divergence form. 

A version of this result for nonlinear operators in divergence form is contained in \cite{T2}, which includes as a special instance the case of the $p$-Laplace operator. Moreover, these results are later extended, for instance, to the anisotropic elliptic operators in \cite{AFLT}, to the parabolic case in \cite{ALT}, and to 
higher order operators in \cite{AB,T3}.

Once a comparison result holds, it is natural to ask whether the equality cases can be characterized and, so, if a rigidity result is in force.
In \cite{lions_remark}, the rigidity result linked to problem \eqref{didi} is proved. Indeed, the authors prove that if equality holds in \eqref{tal}, then $\Omega$ is a ball, $u$ is radially symmetric and decreasing and $f=f^\sharp$.
Rigidity results for a generic linear, elliptic second order operator can be found in \cite{posteraro} and \cite{kesavan}. To the best of our knowledge,  rigidity results for nonlinear operators with Dirichlet boundary conditions are not present in the literature. In this paper, we obtain, as a corollary of our results, the rigidity for the $p-$Laplace operator with Dirichlet boundary conditions in any dimension (see Corollary \ref{cordir}). 


For a long time, it was believed that comparison results could not be proved by means of spherical rearrangement argument when dealing with Robin boundary conditions, until the recent paper \cite{ANT}. The 
authors consider the following problems
\begin{equation*}
    \begin{cases}
    -\Delta u=f \, &\text{in } \Om, \\[1ex]
    \displaystyle{\frac{\partial u}{\partial\nu}+\beta u=0} & \text{on } \partial\Omega,
    \end{cases} \quad \qquad\quad
    \begin{cases}
    -\Delta v=f^\sharp \, & \text{in } \Om^\sharp, \\[1ex]
    \displaystyle{\frac{\partial v}{\partial\nu}+\beta v=0} & \text{on } \partial\Omega^\sharp,
    \end{cases}
\end{equation*}
and they prove a comparison result involving Lorentz norms of $u$ and $v$ whenever $f$ is a non negative function in $L^2(\Omega)$ and $\beta$ is a positive parameter. In particular, in the case $f\equiv 1$, they prove
\begin{equation*}
    \norma{u}_{L^p(\Omega)}\le \norma{v}_{L^p(\Omega^\sharp)}, \quad p=1,2,
\end{equation*}
 and, if $n=2$, the pointwise comparison holds:
\begin{equation}\label{trombetti}
    u^\sharp(x) \le v(x), \quad \text{ for all } x\in\Omega^\sharp.
\end{equation}

In \cite{NOI}, it is proved that \eqref{trombetti} is rigid, i.e. the equality case is possible if and only if $\Omega$ is a ball and $u$ is a radial and decreasing function.

Generalizations of the results contained in \cite{ANT} can be found for the anisotropic case in \cite{San2}, for mixed boundary conditions in \cite{ACNT}, in the case of the Hermite operator in \cite{nunzia2022sharp}. 

In the present paper, we focus our study on the rigidity for $p-$Laplace operator. In this case, the comparison results are obtained in \cite{AGM} and the setting is the following.

Let $\Omega$ be a bounded, open and Lipschitz set in $\R^n$, with $n\ge 2$. 
 Let $p\in(1,+\infty)$ and let $f\in L^{p'}(\Omega)$ be a positive function, where $p'=p/(p-1)$. The Poisson problem for the $p-$Laplace operator with Robin boundary conditions is 
\begin{equation}
\label{rob1}
\begin{cases}
-\Delta_p u:= -\div(\abs{\nabla u}^{p-2} \nabla u)=f & \text{ in } \Omega \\[1ex]
\abs{\nabla u}^{p-2} \displaystyle{\frac{\partial u}{\partial \nu}} + \beta  \abs{u}^{p-2}u =0  & \text{ on } \partial \Omega,
\end{cases}
\end{equation}
where $\nu$ is the unit exterior normal to $\partial\Omega$ and $\beta>0$.
A function $u \in W^{1,p}(\Omega)$ is a weak solution to \eqref{rob1} if
\begin{equation}
\label{weak-f}
\int_{\Omega} \abs{\nabla u}^{p-2}\nabla u \nabla \varphi \, dx + \beta \int_{\partial \Omega} \abs{u}^{p-2} u \varphi \, d\mathcal{H}^{n-1}(x) = \int_{\Omega} f \varphi \, dx, \qquad \forall \varphi \in W^{1,p}(\Omega).
\end{equation}

The symmetrized problem associated to \eqref{rob1} is the following
\begin{equation}
\label{rob2}
\begin{cases}
-\Delta_p v= f^{\sharp} & \text{ in } \Omega^\sharp \\[1ex]
\abs{\nabla v}^{p-2}  \displaystyle{\frac{\partial v}{\partial \nu}} + \beta \abs{v}^{p-2}  v =0  & \text{ on } \partial \Omega^\sharp.
\end{cases}
\end{equation}
In \cite{AGM} the authors establish a comparison result between suitable Lorentz norms (see Definition \ref{lorentz}) of the solutions $u$ and $v$ to problems \eqref{rob1} and \eqref{rob2} respectively. In particular, they prove
	\begin{equation}
	    \label{fgen2}
	    \norma{u}_{L^{pk,p}(\Omega)} \, \leq \norma{v}_{L^{pk,p}(\Omega^\sharp)}, \quad \, \; \forall \, 0 < k \leq \frac{n(p-1)}{(n-2)p +n},
	\end{equation}
and in the case $f\equiv 1$, they prove
\begin{equation}
	    \label{f1}
	   u^\sharp(x)\le v(x), \quad 1 \leq p \leq \frac{n}{n-1}
	\end{equation}
and

\begin{equation}\label{altra}
	    \norma{u}_{L^{pk,p}(\Omega)} \, \leq \norma{v}_{L^{pk,p}(\Omega^\sharp)}, \quad \, \; \forall \, 0 < k \leq \frac{n(p-1)}{(n-2)p +n}, \quad \forall p>1.
\end{equation}

In the present paper, we want to characterize the equality case in \eqref{fgen2} and \eqref{altra}, answering to the open problem contained in \cite{NOI}.  
For simplicity, we state the main Theorem only in the case $f\in L^{p'}(\Omega)$ positive, since in the case $f\equiv 1$ the proof is analogous, as we observe in Remark \ref{osser}.

\begin{teorema}\label{th1}
Let $\Omega\subset \mathbb{R}^n$ be a bounded, open and Lipschitz set and let $\Omega^\sharp$ be the ball centered at the origin with the same measure as $\Omega$. 
Let $u$ be the solution to \eqref{rob1} and let $v$ be a solution to \eqref{rob2}. If  
\begin{equation}\label{nolo}
    \norma{u}_{L^{pk,p}(\Omega)}=\norma{v}_{L^{pk,p}(\Omega^\sharp)}, \quad \text{for some }  k\in \left]0, \frac{n(p-1)}{(n-2)p+n}\right]
\end{equation}
  then,  there exists $x_0\in \R^n$ such that
\begin{equation*}
    \Omega=\Omega^\sharp +x_0, \qquad u(\cdot+x_0)=v(\cdot), \qquad f(\cdot+ x_0)=f^\sharp(\cdot).
\end{equation*}
\end{teorema}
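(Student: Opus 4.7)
My plan is to trace through the proof of the comparison inequality \eqref{fgen2} given in \cite{AGM}, identify the few sharp inequalities on which it relies, and propagate the equality \eqref{nolo} backwards through each of them in order to recover the rigid structure of $\Omega$, $u$ and $f$. The argument is in the spirit of the one carried out in \cite{NOI} for the linear Robin problem, with the extra technicalities coming from the degeneracy of the $p$-Laplacian.

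First I would recall the structure of the proof of \eqref{fgen2}: one tests the weak formulation \eqref{weak-f} with a suitable truncation of $u$ at level $t$, uses the coarea formula to rewrite the Dirichlet energy of the truncation as $\int_t^{+\infty}\!\int_{\{u=s\}}|\nabla u|^{p-1}\,d\mathcal{H}^{n-1}\,ds$, applies H\"older's inequality on each slice $\{u=t\}$ to introduce the perimeter $P(\{u>t\})$, and finally bounds this perimeter from below by $n\omega_n^{1/n}\mu(t)^{(n-1)/n}$ via the isoperimetric inequality, after carefully handling the part of the boundary of $\{u>t\}$ that lies on $\partial\Omega$ (where the Robin condition is used). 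The procedure yields a differential inequality for the distribution function $\mu(t)=|\{u>t\}|$ that becomes an equality for the radial solution $v$; integration in $t$ with the weight prescribed by the Lorentz exponent $pk$ produces \eqref{fgen2}.

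Assuming \eqref{nolo}, all of these inequalities have to saturate for a.e.\ $t$ in a nondegenerate interval. From equality in the isoperimetric inequality I would deduce that, up to negligible sets, $\{u>t\}$ is a ball $B_{r(t)}(x(t))$ for a.e.\ such $t$; from equality in H\"older that $|\nabla u|$ is essentially constant on $\{u=t\}$; and from equality in the boundary contribution that the trace of $u$ on $\partial\Omega$ is constant. The super-level sets $B_{r(t)}(x(t))$ are automatically nested in $t$, and the constancy of $|\nabla u|$ on each sphere, combined with the $C^{1,\alpha}$-regularity of $u$ in the interior, forces the outward normals of the level sets to be radial with respect to a single point, so that all the centres $x(t)$ collapse to a common $x_0$. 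Letting $t$ decrease to $\inf_{\overline{\Omega}}u$ one recovers $\Omega=\Omega^\sharp+x_0$, and $u(\cdot+x_0)$ is a positive, radial and radially decreasing solution of \eqref{rob2}. Uniqueness of such a solution, which follows from the strict monotonicity of $-\Delta_p$, then yields $u(\cdot+x_0)=v(\cdot)$.

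It remains to identify $f$. Along the derivation of \eqref{fgen2} one naturally obtains the identity $\int_\Omega fu\,dx=\int_{\Omega^\sharp}f^\sharp v\,dx$; once $u(\cdot+x_0)=v=u^\sharp(\cdot)$ is known, the Hardy-Littlewood rearrangement inequality $\int_\Omega fu\,dx\leq \int_{\Omega^\sharp}f^\sharp u^\sharp\,dx$ is saturated, and this is known to force $f(\cdot+x_0)$ to be radially symmetric and decreasing, hence equal to $f^\sharp$. The step I expect to be the main obstacle is the concentricity of the level-set balls: equality in the isoperimetric inequality only produces a ball at each level individually, and assembling these balls into a single concentric family requires carefully combining the nestedness in $t$ with the interior regularity of $u$ and with the rigidity in H\"older's inequality. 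A second delicate point is the precise treatment of the Robin boundary contribution in the truncation argument, which becomes non-trivial as soon as the super-level set touches $\partial\Omega$ and which is what propagates the rigidity from the \emph{sub-level} information to the shape of $\Omega$ itself.
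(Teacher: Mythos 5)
The proposal correctly identifies the overall structure (propagate the Lorentz-norm equality backwards to equality in the isoperimetric, H\"older and Hardy--Littlewood inequalities, then extract rigidity from each of them), and the first two stages --- getting pointwise equality in the differential inequality \eqref{talentimu} for a.e.\ $t$, and concluding that the superlevel sets are balls --- go through exactly as you sketch. The identification of $f$ at the end, via saturation of Hardy--Littlewood, is also in line with what the paper does (Lemma~\ref{alvino}). The problem is the concentricity step, which is where the real content of this theorem lies.

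You argue that, since $|\nabla u|$ is a.e.\ constant on each level sphere and $u\in C^{1,\alpha}_{\loc}$, the outward normals force all the centres $x(t)$ to coincide. This is essentially the ``steepest descent'' mechanism of Alvino--Trombetti, and the paper is explicit that it is \emph{not} available here. With $f$ only in $L^{p'}(\Omega)$ the solution of $-\Delta_p u=f$ need not be $C^1$ in the interior, and in any case the degeneracy of the $p$-Laplacian at critical points, together with the lack of continuity of $u$ up to $\partial\Omega$, prevents one from running the gradient-flow argument for a Robin problem. Even granting interior $C^1$-regularity, ``$|\nabla u|$ constant on a.e.\ level sphere'' is information valid for a.e.\ $t$ only, and turning that into a genuine geometric statement about the flow of centres requires a regularity and nondegeneracy of the coarea slicing that you do not have. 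So as written this step has a genuine gap.

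What the paper does instead to get concentricity is worth contrasting. Once $\Omega$ and the superlevel sets are known to be balls, two nested balls meet in at most one point, so for $t>u_m$ the exterior part $\partial U_t\cap\partial\Omega$ is $\mathcal{H}^{n-1}$-null; the Robin boundary term in \eqref{talentimu} therefore drops out and the truncated function $w=u-u_m$ solves the \emph{Dirichlet} problem \eqref{dirichlet} in $\Omega$, with distribution function satisfying \eqref{talentisigma} as an identity. At this point one integrates \eqref{talentisigma} to see that $w^\sharp$ coincides with the explicit radial Dirichlet solution, which has nonvanishing gradient; this gives absolute continuity of $\sigma$ via Brothers--Ziemer. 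Combining this with the constancy of $|\nabla w|$ on level sets yields $\int_\Omega|\nabla w|^p=\int_{\Omega^\sharp}|\nabla w^\sharp|^p$, i.e.\ equality in P\'olya--Szeg\H{o}, and the Brothers--Ziemer rigidity theorem (Theorem~\ref{brothers}~\ref{3}, together with Remark~\ref{remfus}) then gives $w=w^\sharp$ up to translation without ever invoking pointwise regularity of the gradient. This substitution of a rearrangement-theoretic rigidity result for the steepest-descent argument is precisely the point you need to add for the proof to survive in the $p$-Laplacian, low-regularity setting.
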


The idea of the proof of Theorem \ref{th1} is the following. First of all, we prove that hypothesis \eqref{nolo} implies that the superlevel sets of $u$ are balls. The main difficulty is to prove that these balls are concentric. 

Differently from the case of the Laplace operator with Dirichlet boundary conditions studied in \cite{ALT,FerVol}, we can't apply directly the steepest descent method introduced in \cite{AroTal}, because it strongly relays on the continuity of both the solution and of its gradient. In the case of the $p-$Laplace equation, the continuity of the solution up to the boundary depends on the regularity of the given datum $f$. 
To overcome this regularity issue we show that $u$ is a solution to a suitable Dirichlet problem and it satisfies the P\'olya-Szegő inequality with equality sign.  
Then, we can conclude that $u$ is radially symmetric and decreasing, using the classical result contained in \cite{Brothers1988}.
We make use of Lemma \ref{dirichlet_lemma}, where the rigidity of the Poisson problem for the $p$-Laplace operator with Dirichlet boundary condition is proved under the assumption $f\in L^{p'}(\Omega)$ and positive.
Up to our knowledge, Corollary  \ref{cordir} seems to be new in the literature.

\vspace{4mm}
The paper is organized as follows. In Section $2$ we recall some definitions about rearrangement of functions and we state some lemmas that we will need in the proof of the main theorem. Section $3$ is dedicated to the proof of the main result and we conclude with a list of open problems. 

\section{Notation and Preliminaries}
Throughout this article we will denote by $|\Omega|$ the Lebesgue measure of an open and bounded Lipschitz set of $\mathbb{R}^n$, with $n\geq 2$, and by $P(\Omega)$ the perimeter of $\Omega$. Since we are assuming that $\partial \Omega$ is Lipschitz, we have that $P(\Omega)=\mathcal{H}^{n-1}(\partial\Omega)$, where $\mathcal{H}^{n-1}$ denotes the $(n-1)-$dimensional Hausdorff measure.

We recall the classical isoperimetric inequality and and we refer the reader, for example, to \cite{ossy,burago,chavel,talenti} and to the original paper by De Giorgi \cite{degiorgi}.
\begin{teorema}[Isoperimetric Inequality]
Let $E\subset \R^n$ be a set of finite perimeter. Then,
\begin{equation}
   \label{isoperimetrica}
    n \omega_n^{\frac{1}{n}} \abs{E}^{\frac{n-1}{n}}\le P(E),
\end{equation}
where $\omega_n$ is the measure of the unit ball in $\R^n$.
Equality occurs if and only if $E$ is (equivalent to) a Ball.
\end{teorema}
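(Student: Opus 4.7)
The plan is to prove the isoperimetric inequality via the Brunn--Minkowski inequality together with De Giorgi's characterization of the perimeter as a Minkowski content. Specifically, for any set $E$ of finite perimeter one has the inequality
\begin{equation*}
P(E) \leq \liminf_{\varepsilon \to 0^+} \frac{|E + \varepsilon B_1| - |E|}{\varepsilon},
\end{equation*}
where $B_1$ is the open unit ball centered at the origin and $E + \varepsilon B_1$ denotes the Minkowski sum. This bound follows from the approximation of characteristic functions by mollifications and the lower semicontinuity of the total variation; for sets of finite perimeter one actually has equality, but only the $\leq$ direction is needed here.

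Next, I would apply the Brunn--Minkowski inequality
\begin{equation*}
|A + B|^{1/n} \geq |A|^{1/n} + |B|^{1/n}
\end{equation*}
(which is the deepest input, typically proved via induction on dimension or via a mass-transport argument) with $A = E$ and $B = \varepsilon B_1$. Since $|\varepsilon B_1| = \varepsilon^n \omega_n$, this gives
\begin{equation*}
|E + \varepsilon B_1|^{1/n} \geq |E|^{1/n} + \varepsilon \, \omega_n^{1/n}.
\end{equation*}
Raising to the $n$-th power and expanding yields
\begin{equation*}
|E + \varepsilon B_1| \geq |E| + n \, \omega_n^{1/n} |E|^{(n-1)/n} \varepsilon + O(\varepsilon^2).
\end{equation*}
Subtracting $|E|$, dividing by $\varepsilon$, and passing to the limit as $\varepsilon \to 0^+$ produces exactly the bound $n \omega_n^{1/n} |E|^{(n-1)/n} \leq P(E)$.

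The main obstacle is the characterization of the equality case, which is substantially more subtle than the inequality itself. The natural strategy is to trace back the equality through the chain of estimates: equality in \eqref{isoperimetrica} forces asymptotic equality in Brunn--Minkowski for $A = E$ and $B = \varepsilon B_1$ as $\varepsilon \to 0^+$. One then invokes the equality case of Brunn--Minkowski, which states that $|A + B|^{1/n} = |A|^{1/n} + |B|^{1/n}$ holds (modulo null sets) if and only if $A$ and $B$ are homothetic convex bodies; since $B_1$ is a ball, $E$ must be a ball as well, up to a set of measure zero. The delicate point is that the equality case of Brunn--Minkowski is itself nontrivial and its extension to sets of finite perimeter (as opposed to convex sets) requires either a convex-hull reduction, a Bonnesen-type quantitative refinement, or the modern machinery of the sharp quantitative isoperimetric inequality, which controls the Fraenkel asymmetry by the isoperimetric deficit and thus gives rigidity as a direct corollary. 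In a textbook treatment, I would cite one of the classical references (such as \cite{burago,chavel,talenti}) for this rigidity step rather than reproducing the argument in full.
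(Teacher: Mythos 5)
The paper does not prove this theorem at all: it is recalled as a classical result and attributed to the literature (De Giorgi and the standard references), so there is no internal proof to compare with. Judged on its own terms, your Brunn--Minkowski argument for the inequality itself is sound: the bound $P(E)\le \liminf_{\varepsilon\to 0^+}\varepsilon^{-1}\left(|E+\varepsilon B_1|-|E|\right)$ holds for measurable sets, and combining it with $|E+\varepsilon B_1|^{1/n}\ge |E|^{1/n}+\varepsilon\,\omega_n^{1/n}$ and expanding gives \eqref{isoperimetrica}. This is a legitimate, classical route.

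The genuine gap is in the rigidity step. From equality in \eqref{isoperimetrica} you only learn that the Brunn--Minkowski deficit for the pair $(E,\varepsilon B_1)$ is $o(\varepsilon)$ as $\varepsilon\to 0^+$; the passage to the limit destroys exactly the information you would need, since for each fixed $\varepsilon$ the inequality $|E+\varepsilon B_1|^{1/n}\ge |E|^{1/n}+\varepsilon\,\omega_n^{1/n}$ may remain strict (with a deficit vanishing to first order) even though the isoperimetric deficit of $E$ is zero. Hence ``asymptotic equality'' does not license an appeal to the equality case of Brunn--Minkowski, and the conclusion that $E$ is a ball does not follow from the argument as sketched. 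You do acknowledge the difficulty and ultimately propose to cite the literature (quantitative isoperimetric inequality, Bonnesen-type refinements, or De Giorgi's original analysis for sets of finite perimeter), which is consistent with how the paper treats the statement; but then the equality characterization should be presented as quoted from those references, not as a consequence of the Brunn--Minkowski equality case via the chain of estimates you describe, because that chain, taken literally, breaks at the limit $\varepsilon\to 0^+$.
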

For the following theorem, we refer to \cite{ambrosio2000functions}.
 \begin{teorema}[Coarea formula]
 Let $\Omega \subset \mathbb{R}^n$ be an open set with Lipschitz boundary. Let $f\in W^{1,1}_{\text{loc}}(\Omega)$ and let $u:\Omega\to\R$ be a measurable function. Then,
 \begin{equation}
   \label{coarea}
   {\displaystyle \int _{\Omega}u(x)|\nabla f(x)|dx=\int _{\mathbb {R} }dt\int_{\Omega\cap f^{-1}(t)}u(y)\, d\mathcal {H}^{n-1}(y)}.
 \end{equation}
 \end{teorema}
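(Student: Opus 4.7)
The plan is to derive the identity in three stages, reducing progressively to simpler cases. First, by the decomposition $u=u^+-u^-$, linearity, and the monotone convergence theorem, it suffices to establish the formula when $u=\chi_E$ is the characteristic function of a measurable set $E\subset\Omega$, i.e.\ to show
\[
\int_E |\nabla f(x)|\,dx \;=\; \int_\R \mathcal{H}^{n-1}(E\cap f^{-1}(t))\,dt.
\]
Approximating nonnegative measurable $u$ from below by simple functions and applying monotone convergence on both sides then recovers the general statement.

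Second, I would prove the reduced identity in the smooth case $f\in C^\infty(\Omega)$ by splitting $\Omega$ into the critical set $C=\{\nabla f=0\}$ and its complement. On $C$ the left-hand side vanishes, while Sard's theorem gives $|f(C)|=0$, so the critical level sets contribute nothing to the right-hand side either. On $\Omega\setminus C$, the implicit function theorem lets me complete $f$ locally to a $C^\infty$ diffeomorphism $\Phi=(f,\Phi_2,\dots,\Phi_n)$, with $\Phi_2,\dots,\Phi_n$ chosen as orthonormal coordinates on the level sets of $f$, so that $\det D\Phi=|\nabla f|$ and the level sets of $f$ correspond to the hyperplanes $\{y_1=t\}$. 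The classical change of variables combined with Fubini's theorem then yields the identity in each chart, and a partition-of-unity argument assembles these local statements into the global formula.

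The third step, passing from smooth $f$ to $f\in W^{1,1}_{\loc}(\Omega)$, is the main obstacle, because the level sets $\{f=t\}$ are not stable under $W^{1,1}$ convergence and the pointwise representative of $f$ is only defined almost everywhere. The cleanest approach is to invoke the Fleming--Rishel coarea formula: since $W^{1,1}_{\loc}\subset BV_{\loc}$, one has the BV identity $|Df|(E)=\int_\R P(\{f>t\};E)\,dt$, where $P(\,\cdot\,;E)$ denotes the relative perimeter. Combined with the Sobolev identification $|Df|=|\nabla f|\,dx$ and the De Giorgi-type identity $P(\{f>t\};E)=\mathcal{H}^{n-1}(E\cap\{f=t\})$ for almost every $t$ (which can be established by approximating $f$ by mollifications $f_\varepsilon$, applying the smooth case proved in step two, and exploiting lower semicontinuity of perimeter under $L^1$ convergence of sublevel sets together with the $L^1_{\loc}$ convergence $|\nabla f_\varepsilon|\to|\nabla f|$), this reduces the $W^{1,1}$ statement to the combination of the smooth case and the classical BV theory of reduced boundaries. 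The delicate point throughout is the exceptional set of $t$'s for which the level set is irregular; these values form a Lebesgue-null subset of $\R$ and are therefore harmless in the integral on the right-hand side.
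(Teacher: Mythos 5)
The paper does not prove this statement: it is quoted as a known result, with the proof deferred to the reference \cite{ambrosio2000functions}, so there is no internal argument to compare yours against. Your three-stage plan (reduction to characteristic functions by monotone convergence; the smooth case via Sard's theorem, a local change of variables and Fubini assembled by a partition of unity; extension to $W^{1,1}_{\loc}\subset BV_{\loc}$ through the Fleming--Rishel coarea formula and De Giorgi's structure theorem) is exactly the standard textbook route, and as a blueprint it is sound. Two presentational points: the identity can only be asserted for $u\ge 0$ (Borel) or under an integrability hypothesis, otherwise the splitting $u=u^+-u^-$ risks $\infty-\infty$; and in Step 2 one cannot in general arrange literally $\det D\Phi=|\nabla f|$ with ``orthonormal coordinates on the level sets''. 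The usual computation takes $\Phi(x)=(f(x),x')$ after a rotation making $\partial_n f\neq 0$, so $|\det D\Phi|=|\partial_n f|$, and the missing factor $|\nabla f|/|\partial_n f|=\sqrt{1+|\nabla' g|^2}$ reappears as the area element of the graph describing the level set; the conclusion is the same.

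The one genuine gap is in Step 3. Mollification, lower semicontinuity of the perimeter along $L^1$-convergent (sub)level sets, and $|\nabla f_\varepsilon|\to|\nabla f|$ in $L^1_{\loc}$ give the Fleming--Rishel identity $\int_E|\nabla f|\,dx=\int_{\R}P(\{f>t\};E)\,dt$, and De Giorgi's structure theorem converts $P(\{f>t\};E)$ into $\mathcal{H}^{n-1}(E\cap\partial^*\{f>t\})$; but none of this identifies $\partial^*\{f>t\}$ with the set $f^{-1}(t)$ appearing in the statement, which is the point you label ``De Giorgi-type identity'' and then justify only by the same lower semicontinuity argument. For $f\in W^{1,1}_{\loc}$ the pointwise level set is not even well defined: $f$ must be replaced by its precise representative, which exists outside an $\mathcal{H}^{n-1}$-null set (a Federer--Ziemer capacitary fact for $W^{1,1}$), and one must then prove that for a.e.\ $t$ the level set of that representative coincides with $\partial^*\{f>t\}$ up to $\mathcal{H}^{n-1}$-null sets. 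This requires the fine properties of Sobolev/BV functions (approximate upper and lower limits, absence of a jump part, control of the levels whose preimage has positive Lebesgue measure), not merely perimeter lower semicontinuity; it is precisely the technical core for which the paper cites \cite{ambrosio2000functions}. With that ingredient supplied or cited, your argument is complete.
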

Let us recall some basic notions about rearrangements. For a general overview, see, for instance, \cite{kes}.

 \begin{definizione}
	Let $u: \Omega \to \R$ be a measurable function, the \emph{distribution function} of $u$ is the function $\mu : [0,+\infty[\, \to [0, +\infty[$ defined as the measure of the superlevel sets of $u$, i.e.
	$$
	\mu(t)= \abs{\Set{x \in \Omega \, :\,  \abs{u(x)} > t}}.
	$$
\end{definizione}
\begin{definizione} 
	Let $u: \Omega \to \R$ be a measurable function, the \emph{decreasing rearrangement} of $u$ is the distribution function of $\mu $. We will denote it by $u^\ast(\cdot)$.
	\end{definizione}
	\begin{oss}\label{inverse}
	Let us notice that the function $\mu(\cdot)$ is decreasing and right continuous and the function $u^\ast(\cdot)$ is its generalized inverse.
	\end{oss}

	\begin{definizione}\label{rear}
	 The \emph{Schwartz rearrangement} of $u$ is the function $u^\sharp $ whose superlevel sets are balls with the same measure as the superlevel sets of $u$. 
	\end{definizione}
		We have the following relation between $u^\sharp$ and $u^*$:
	$$u^\sharp (x)= u^*(\omega_n\abs{x}^n),$$
 where $\omega_n$ is the measure of the unit ball in $\R^n$, and one can easily check that the functions $u$, $u^*$ e $u^\sharp$ are equi-distributed, i.e. they have the same distribution function, and it holds
$$ \displaystyle{\norma{u}_{L^p(\Omega)}=\norma{u^*}_{L^p(0, \abs{\Omega})}=\lVert{u^\sharp}\rVert_{L^p(\Omega^\sharp)}}, \quad \text{for all } p\ge1.$$

We also recall the Hardy-Littlewood inequality, an important propriety of the decreasing rearrangement,
\begin{equation*}
 \int_{\Omega} \abs{h(x)g(x)} \, dx \le \int_{0}^{\abs{\Omega}} h^*(s) g^*(s) \, ds.
\end{equation*}

\noindent So, by choosing $h(\cdot)=\chi_{\left\lbrace\abs{u}>t\right\rbrace}$, one has
\begin{equation*}
\int_{\abs{u}>t} \abs{g(x)} \, dx \le \int_{0}^{\mu(t)} g^*(s) \, ds.
\end{equation*}
We now introduce the Lorentz spaces (see \cite{T3} for more details on this topic). 
\begin{definizione}\label{lorentz}
Let $0<p<+\infty$ and $0<q\le +\infty$. The Lorentz space $L^{p,q}(\Omega)$ is the space of those functions such that the quantity:
\begin{equation*}
    \norma{u}_{L^{p,q}} =
    \begin{cases}
   	 \displaystyle{ p^{\frac{1}{q}} \left( \int_{0}^{\infty}  t^{q} \mu(t)^{\frac{q}{p}}\, \frac{dt}{t}\right)^{\frac{1}{q}}} & 0<q<\infty\\[2ex]
	 \displaystyle{\sup_{t>0} \, (t^p \mu(t))} & q=\infty
	\end{cases}
\end{equation*}
is finite.
\end{definizione}
	
 Let us observe that for $p=q$ the Lorentz space coincides with the $L^p$ space, as a consequence of the \emph{Cavalieri's Principle}

\[
\int_\Omega \abs{u}^p =p \int_0^{+\infty} t^{p-1} \mu(t) \, dt.
\]
The solutions $u$ to problem \eqref{rob1} and $v$ to problem \eqref{rob2} are both $p$-superharmonic and, as a consequence of the strong maximum principle and the lower semicontinuity (see \cite{V,linp}), they achieve their minima on the boundary.
If we set 
$$u_m=\min_\Omega u, \quad v_m=\min_{\Omega^\sharp} v$$ 
the positiveness of $\beta$ and the Robin boundary conditions leads to $u_m \geq 0$ and $v_m \geq 0$. Hence, $u$ and $v$ are strictly positive in the interior of $\Omega$.
Moreover, we can observe that 
\begin{equation}
		\label{minima_eq}
		u_m = \min_\Omega u \leq  \min_{\Omega^\sharp} v= v_m,
	\end{equation}
indeed,

	\begin{equation*}
		\begin{split}
			 v_m^{p-1}  \text{P}(\Omega^\sharp) &= \int_{\partial \Omega^\sharp} v(x)^{p-1} \, d\mathcal{H}^{n-1}(x)= \frac{1}{\beta}\int_{\Omega^\sharp} f^\sharp \, dx=\frac{1}{\beta} \int_{\Omega} f \, dx \\
			& = \int_{\partial \Omega} u(x)^{p-1} \, d\mathcal{H}^{n-1}(x) \\
			&\geq u_m^{p-1}  \text{P}(\Omega)  \geq { u_m}^{p-1} \text{P}(\Omega^\sharp).
		\end{split}
	\end{equation*}

	Moreover, it holds
	\begin{equation}
		\label{mf}
		\mu (t) \leq \phi (t) = \abs{\Omega}, \quad \forall t \leq v_m.
	\end{equation}

Now, for $t\geq 0$, we introduce the following notations:
$$U_t=\left\lbrace x\in \Omega : u(x)>t\right\rbrace \quad \partial U_t^{int}=\partial U_t \cap \Omega, \quad \partial U_t^{ext}=\partial U_t \cap \partial\Omega, \quad \mu(t)=\abs{U_t}$$
and
$$V_t=\left\lbrace x\in \Omega^\sharp : v(x)> t\right\rbrace, \quad \partial V_t^{int}=\partial V_t \cap \Omega, \quad \partial V_t^{ext}=\partial V_t \cap \partial\Omega, \quad \phi(t)=\abs{V_t}.$$

Because of the invariance of the $p-$Laplacian and of the Schwarz rearrangement of $f$ by rotation, the solution $v$ to \eqref{rob2} is radial, so the set $V_t$ are balls.

Now, we recall some technical Lemmas, proved in \cite{AGM}, that we need in what follows. We recall the proof of Lemma \ref{key} for reader's convenience, while we omit the proof of Lemma \ref{lemma3.3} and Lemma \ref{lem_Gronwall}.

\begin{lemma}
\label{key}
Let $u$ be the solution to \eqref{rob1} and let $v$ be the solution to \eqref{rob2}. Then, for almost every $t >0$, we have
	\begin{equation} 
	\label{talentimu}
\gamma_n \mu(t)^{\left(1-\frac{1}{n}\right)\frac{p}{p-1}} \leq \left(\int_0^{\mu (t)}f^\ast (s ) \, ds\right)^{\frac{1}{p-1}} \left( - \mu'(t) + \frac{1}{\beta ^{\frac{1}{p-1}}}\int_{\partial U_t^\text{ext}} \frac{1}{u} \, d\mathcal{H}^{n-1}(x)\right)
\end{equation}
	and
\begin{equation} 
	\label{talentiphi}
	\gamma_n \phi(t)^{\left(1-\frac{1}{n}\right)\frac{p}{p-1}} = \left(\int_0^{\phi (t)}f^\ast (s ) \, ds\right)^{\frac{1}{p-1}} \left( - \phi'(t) + \frac{1}{\beta ^{\frac{1}{p-1}}}\int_{\partial V_t^\text{ext}} \frac{1}{v} \, d\mathcal{H}^{n-1}(x)\right).
	\end{equation}
		where $\gamma_n= \left(n \omega_n^{1/n}\right)^{\frac{p}{p-1}}$.
\end{lemma}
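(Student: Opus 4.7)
The plan is to derive \eqref{talentimu} by testing the weak formulation \eqref{weak-f} with a truncation that isolates the superlevel set $\{u>t\}$, and then to combine the resulting integral identity with the coarea formula, Hölder's inequality (in both integral and discrete forms) and the isoperimetric inequality \eqref{isoperimetrica}. The identity \eqref{talentiphi} will follow because every inequality in the chain turns into an equality when the superlevel set is a ball and the function is radial.

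Concretely, for $h>0$ and $t\geq 0$ I would use the test function
\[
\varphi_h(x)=\begin{cases} 0 & u(x)\leq t,\\ u(x)-t & t<u(x)<t+h,\\ h & u(x)\geq t+h,\end{cases}
\]
which lies in $W^{1,p}(\Omega)$. Plugging into \eqref{weak-f}, dividing by $h$ and letting $h\to 0^+$, the coarea formula \eqref{coarea} yields, for a.e. $t>0$, the pointwise balance
\[
\int_{\partial U_t^{\mathrm{int}}}|\nabla u|^{p-1}\,d\mathcal{H}^{n-1}+\beta\int_{\partial U_t^{\mathrm{ext}}}u^{p-1}\,d\mathcal{H}^{n-1}=\int_{U_t}f\,dx\leq \int_0^{\mu(t)}f^\ast(s)\,ds,
\]
where the last step uses the Hardy-Littlewood inequality. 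Simultaneously, the coarea formula applied to $-\mu'(t)$ gives $-\mu'(t)=\int_{\partial U_t^{\mathrm{int}}}|\nabla u|^{-1}\,d\mathcal{H}^{n-1}$ for a.e. $t$.

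Next I would bound the perimeter of $U_t$. Splitting $P(U_t)=\mathcal{H}^{n-1}(\partial U_t^{\mathrm{int}})+\mathcal{H}^{n-1}(\partial U_t^{\mathrm{ext}})$, and writing
\[
\mathcal{H}^{n-1}(\partial U_t^{\mathrm{int}})=\int_{\partial U_t^{\mathrm{int}}}|\nabla u|^{(p-1)/p}|\nabla u|^{-(p-1)/p}\,d\mathcal{H}^{n-1},\qquad \mathcal{H}^{n-1}(\partial U_t^{\mathrm{ext}})=\int_{\partial U_t^{\mathrm{ext}}}\bigl(\beta^{1/p}u^{(p-1)/p}\bigr)\bigl(\beta^{-1/p}u^{-(p-1)/p}\bigr)\,d\mathcal{H}^{n-1},
\]
Hölder's inequality with exponents $p$ and $p/(p-1)$ controls each piece by a product of an integral of $|\nabla u|^{p-1}$ (resp.\ $\beta u^{p-1}$) and an integral of $|\nabla u|^{-1}$ (resp.\ $\beta^{-1/(p-1)}u^{-1}$). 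Summing and applying the discrete Hölder inequality $a_1 b_1+a_2 b_2\leq (a_1^p+a_2^p)^{1/p}(b_1^{p'}+b_2^{p'})^{1/p'}$ yields
\[
P(U_t)\leq\left(\int_0^{\mu(t)}f^\ast(s)\,ds\right)^{1/p}\left(-\mu'(t)+\frac{1}{\beta^{1/(p-1)}}\int_{\partial U_t^{\mathrm{ext}}}\frac{1}{u}\,d\mathcal{H}^{n-1}\right)^{(p-1)/p}.
\]
Combining with the isoperimetric bound $n\omega_n^{1/n}\mu(t)^{(n-1)/n}\leq P(U_t)$ and raising to the $p/(p-1)$ power gives \eqref{talentimu} with $\gamma_n=(n\omega_n^{1/n})^{p/(p-1)}$.

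The identity \eqref{talentiphi} follows from the same derivation applied to $v$: because $v$ is radial on $\Omega^\sharp$, each set $V_t$ is a ball concentric with $\Omega^\sharp$, so $|\nabla v|$ is constant on $\partial V_t^{\mathrm{int}}$ and $v$ is constant on $\partial V_t^{\mathrm{ext}}$, which turns both Hölder steps into equalities; the isoperimetric inequality is saturated on balls; and $f^\sharp$ coincides with its own rearrangement so Hardy-Littlewood is an equality too. The delicate point of the argument is the correct weighting of the two boundary contributions so that the discrete Hölder step produces exactly the quantity $\int_{U_t}f$ on the right side; this is where the factor $\beta^{-1/(p-1)}$ in front of $\int_{\partial U_t^{\mathrm{ext}}}u^{-1}$ comes from, and getting this bookkeeping right is the only subtle part of the argument.
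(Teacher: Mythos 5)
Your proposal follows the same route as the paper: test \eqref{weak-f} with the truncation $\varphi_h$, pass to the limit $h\to 0^+$ using the coarea formula to get
\[
\int_{\partial U_t^{\mathrm{int}}}|\nabla u|^{p-1}\,d\mathcal{H}^{n-1}+\beta\int_{\partial U_t^{\mathrm{ext}}}u^{p-1}\,d\mathcal{H}^{n-1}=\int_{U_t}f\,dx,
\]
then chain the isoperimetric inequality, a Hölder step, and Hardy--Littlewood, and observe that radiality of $v$ saturates everything. The only presentational difference is in the Hölder step: the paper introduces the piecewise function $g=|\nabla u|^{p-1}$ on $\partial U_t^{\mathrm{int}}$, $g=\beta u^{p-1}$ on $\partial U_t^{\mathrm{ext}}$, and applies a single Hölder inequality with weight $g$ on all of $\partial U_t$; you split $P(U_t)$ into the two boundary pieces, apply Hölder to each, and recombine with the discrete Hölder inequality. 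These are algebraically identical (a single Hölder over a partitioned domain factors into exactly your two-piece-plus-discrete version), so this is a cosmetic difference, not a genuinely different argument.

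One small but real inaccuracy: you state
\[
-\mu'(t)=\int_{\partial U_t^{\mathrm{int}}}\frac{1}{|\nabla u|}\,d\mathcal{H}^{n-1}\quad\text{for a.e. }t,
\]
as an equality. In general this fails; what one actually has (cf.\ Theorem \ref{brothers}\ref{1}, the Brothers--Ziemer inequality) is only
\[
-\mu'(t)\geq\int_{\partial U_t^{\mathrm{int}}}\frac{1}{|\nabla u|}\,d\mathcal{H}^{n-1},
\]
equality being tied to absolute continuity of $\mu$, which is a nontrivial matter and not available for free at this stage. Fortunately only the $\geq$ direction is used to pass from the Hölder bound to the right-hand side of \eqref{talentimu}, so the estimate you reach is correct; but the equality claim as written is false, and in a paper where the equality cases of exactly these inequalities are later analyzed in detail (Proposition \ref{thnorme} and Lemma \ref{dirichlet_lemma}), conflating $\geq$ with $=$ here would be a genuine source of error if carried forward.
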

\begin{proof}
 Let $t >0$ and $h >0$. In the weak formulation \eqref{weak-f}, we choose the following test function 
	\begin{equation}\label{phi}
	\left.
	\varphi (x)= 
	\right.
	\begin{cases}
	0 & \text{ if }u < t \\
	u-t & \text{ if }t< u < t+h \\
	h & \text{ if }u > t+h,
	\end{cases} 
	\end{equation}
obtaining
	
	\begin{equation}\label{weaky}
	\begin{split}
	\int_{U_t \setminus U_{t+h}} \abs{\nabla u}^p\, dx &+ \beta h \int_{\partial U_{t+h}^{ ext }} u^{p-1}  \, d\mathcal{H}^{n-1}(x) + \beta \int_{\partial U_{t}^{ ext }\setminus \partial U_{t+h}^{ ext }} u^{p-1} (u-t) \, d\mathcal{H}^{n-1}(x) \\
	& =  \int_{U_t \setminus U_{t+h}} f (u-t ) \, dx + h \int_{U_{t+h}} f \, dx.
	\end{split}
	\end{equation}
		Dividing \eqref{weaky} by $h$, using coarea formula and letting $h$ go to $0$, we have that for a.e. $t>0$
	\begin{equation*}
	\int_{\partial U_t} g(x) \, d\mathcal{H}^{n-1} = \int_{U_{t}} f\, dx ,
	\end{equation*}
	where
	\begin{equation}\label{fung}
	\begin{cases}
	    	\abs{\nabla u }^{p-1}& \text{ if }x \in \partial U_t^{ int },\\
	\beta u ^{p-1}& \text{ if }x \in \partial U_t^{ ext }.
	\end{cases}
	\end{equation}
Using the isoperimetric inequality, for a.e. $t\in [0, +\infty)$ we have 
	\begin{align}
	\label{isop}
	\hspace{-0.8em}    n \omega_n^{\frac{1}{n}}  \mu(t)^{\frac{n-1}{n}} &\leq P(U_t) = \int_{\partial U_t} \,  d\mathcal{H}^{n-1}\\
	    \label{holder}
	    &\leq\left(\int_{\partial U_t}g\, d\mathcal{H}^{n-1}(x)\right)^{\frac{1}{p}} \left(\int_{\partial U_t}\frac{1}{g^{\frac{1}{p-1}}} \, d\mathcal{H}^{n-1}(x)\right)^{1-\frac{1}{p}} \\
	&= \left(\int_{\partial U_t}g \, d\mathcal{H}^{n-1}(x)\right)^{\frac{1}{p}} \left( \int_{\partial U_t^{ int }}\frac{1}{\abs{\nabla u}}\, d\mathcal{H}^{n-1}(x) +\frac{1}{\beta^{\frac{1}{p-1}}} \int_{\partial U_t^{ ext }}\frac{1}{u} \,  d\mathcal{H}^{n-1}(x) \right)^{1-\frac{1}{p}}\\
	\label{hardy}
	&\leq \left(\int_0^{\mu(t)} f^\ast (s) \, ds\right)^{\frac{1}{p}} \left( -\mu'(t) +\frac{1}{\beta^{\frac{1}{p-1}}} \int_{\partial U_t^{ ext }}\frac{1}{u} \,  d\mathcal{H}^{n-1}(x) \right)^{1-\frac{1}{p}},
	\end{align}
	and, so, \eqref{talentimu} follows. Finally, we notice that, if $v$ is the solution to \eqref{rob2}, then all the inequalities above are equalities, and, consequently, we have \eqref{talentiphi}.
\end{proof}

\begin{lemma}
	\label{lemma3.3}
	For all $\tau \geq v_m $, we have
	\begin{equation}
	\label{intmu}
	\int_0^\tau t^{p-1} \left(\int_{\partial U_t^{ ext } } \frac{1}{ u(x) } \, d \mathcal{H}^{n-1}(x)\right) \, dt \leq \frac{1}{p\beta} \int_0^{\abs{\Omega}} f^\ast(s) \,ds.
	\end{equation}
	Moreover,
	\begin{equation}
	\label{intfi}
	\int_0^\tau t^{p-1} \left(\int_{\partial V_t \cap \partial\Omega^\sharp } \frac{1}{ v(x) } \, d \mathcal{H}^{n-1}(x)\right) \, dt =  \frac{1}{p\beta} \int_0^{\abs{\Omega}} f^\ast(s) \,ds,
	\end{equation}
\end{lemma}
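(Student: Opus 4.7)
The plan is to reduce both statements \eqref{intmu} and \eqref{intfi} to a simple boundary identity, by interchanging the order of integration in the outer $t$-integral and then invoking the weak formulation \eqref{weak-f} with the constant test function $\varphi\equiv 1$.

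For the inequality \eqref{intmu}, first I would apply Fubini's theorem, using that (for a.e.\ $t$) the set $\partial U_t^{ext}=\partial U_t\cap\partial\Omega$ coincides up to $\mathcal{H}^{n-1}$-null sets with $\{x\in\partial\Omega:u(x)\geq t\}$:
\begin{equation*}
\int_0^\tau t^{p-1}\int_{\partial U_t^{ext}}\frac{d\mathcal{H}^{n-1}}{u}\,dt
=\int_{\partial\Omega}\frac{1}{u(x)}\int_0^{\min(\tau,u(x))}t^{p-1}\,dt\,d\mathcal{H}^{n-1}(x)
=\frac{1}{p}\int_{\partial\Omega}\frac{[\min(\tau,u)]^p}{u}\,d\mathcal{H}^{n-1}.
\end{equation*}
Since $\min(\tau,u)\le u$ pointwise, the integrand is bounded by $u^{p-1}$. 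On the other hand, plugging $\varphi\equiv1$ into \eqref{weak-f} yields
\[
\beta\int_{\partial\Omega}u^{p-1}\,d\mathcal{H}^{n-1}=\int_\Omega f\,dx=\int_0^{|\Omega|}f^\ast(s)\,ds,
\]
which combined with the previous display gives \eqref{intmu}.

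For \eqref{intfi}, the same Fubini rewriting applies to $v$, and the inequality becomes an equality thanks to the hypothesis $\tau\ge v_m$: indeed, by the strong maximum principle and the radial symmetry of $v$, one has $v\equiv v_m$ on $\partial\Omega^\sharp$, so $\min(\tau,v)=v_m=v$ there and $[\min(\tau,v)]^p/v=v^{p-1}$ exactly. Equivalently, $\partial V_t\cap\partial\Omega^\sharp=\partial\Omega^\sharp$ for $t<v_m$ and is empty for $t>v_m$, so the left-hand side of \eqref{intfi} collapses to $v_m^{p-1}P(\Omega^\sharp)/p$, which equals $\frac{1}{p\beta}\int_0^{|\Omega|}f^\ast(s)\,ds$ by the identity $\beta v_m^{p-1}P(\Omega^\sharp)=\int_{\Omega^\sharp}f^\sharp\,dx$ already used in the derivation of \eqref{minima_eq}.

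The only delicate point I expect is making the Fubini step rigorous, since it requires the coincidence (for a.e.\ $t$) of $\partial U_t^{ext}$ with the superlevel set of the trace of $u$ on $\partial\Omega$. This is a purely measure-theoretic issue stemming from the Lipschitz regularity of $\partial\Omega$ and the existence of a well-defined trace for $u\in W^{1,p}(\Omega)$; once it is settled, the rest of the argument is essentially algebraic and does not require any further use of the PDE beyond the constant test function.
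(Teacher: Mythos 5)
The paper deliberately omits the proof of this lemma, deferring to \cite{AGM}; your proof is correct and is, in substance, the standard argument one would expect for such a statement (and which \cite{AGM} uses). The Fubini rewriting
\begin{equation*}
\int_0^\tau t^{p-1}\!\int_{\partial U_t^{\mathrm{ext}}}\frac{d\mathcal{H}^{n-1}}{u}\,dt
=\frac{1}{p}\int_{\partial\Omega}\frac{\bigl[\min(\tau,u)\bigr]^{p}}{u}\,d\mathcal{H}^{n-1}
\le\frac{1}{p}\int_{\partial\Omega}u^{p-1}\,d\mathcal{H}^{n-1}
\end{equation*}
followed by the test function $\varphi\equiv 1$ in \eqref{weak-f}, which gives $\beta\int_{\partial\Omega}u^{p-1}\,d\mathcal{H}^{n-1}=\int_\Omega f\,dx=\int_0^{|\Omega|}f^\ast(s)\,ds$, yields \eqref{intmu} cleanly. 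The equality case \eqref{intfi} is also correctly handled: for $\tau\ge v_m$ and $v$ radial, $v\equiv v_m$ on $\partial\Omega^\sharp$, so the bound $\min(\tau,v)\le v$ is saturated, and the left-hand side collapses to $v_m^{p-1}P(\Omega^\sharp)/p=\frac{1}{p\beta}\int_0^{|\Omega|}f^\ast$, using the same boundary identity already displayed in the derivation of \eqref{minima_eq}.

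Two small observations. First, you are right to flag the measure-theoretic identification of $\partial U_t^{\mathrm{ext}}$ with the trace superlevel set $\{x\in\partial\Omega: u(x)>t\}$; strictly speaking, the boundary terms in \eqref{weaky} already arise as integrals over the trace superlevel sets (via the truncation $\varphi$ and the trace operator for $W^{1,p}(\Omega)$ on a Lipschitz boundary), so the notation $\partial U_t\cap\partial\Omega$ is an abuse that the paper carries throughout and which causes no harm here. Second, the hypothesis $\tau\ge v_m$ is used only in \eqref{intfi}; for \eqref{intmu} your argument in fact gives the inequality for every $\tau\ge 0$, since $\min(\tau,u)\le u$ unconditionally. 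That is consistent with, indeed slightly stronger than, the stated lemma.
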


\begin{lemma}[Gronwall]\label{lem_Gronwall}
Let $\xi(\tau)$ be a continuously differentiable function, let $q>1$ and let $C$ be a non negative constant $C$ such that the following differential inequality holds 
$$
	\tau \xi' (\tau ) \leq (q-1) \xi(\tau) + C \quad \forall \tau \geq \tau_0 >0.
	$$
Then, we have

\begin{equation}\label{gron1}
    \xi(\tau) \leq \left(\xi(\tau_0) + \frac{C}{q-1}\right) \left( \frac{\tau}{\tau_0}\right)^{q-1} - \frac{C}{q-1}   \quad \forall \tau \geq \tau_0,
\end{equation}
and 
\begin{equation}\label{gron2}
\xi'(\tau) \leq \left( \frac{(q-1)\xi(\tau_0 )+ C}{\tau_0}\right) \left( \frac{\tau}{\tau_0}\right)^{q-2} \quad \forall \tau \geq \tau_0.
\end{equation}
\end{lemma}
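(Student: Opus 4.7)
The plan is to recognize the hypothesis $\tau\xi'(\tau)\le (q-1)\xi(\tau)+C$ as a linear first-order differential inequality that becomes exact after multiplication by the integrating factor $\tau^{-q}$. Indeed, dividing both sides by $\tau^q$ one obtains
$$
\frac{\xi'(\tau)}{\tau^{q-1}} - \frac{(q-1)\,\xi(\tau)}{\tau^q} \;\le\; \frac{C}{\tau^q},
$$
and the left-hand side is exactly $\dfrac{d}{d\tau}\!\left[\dfrac{\xi(\tau)}{\tau^{q-1}}\right]$. This reduces the problem to an inequality that can be integrated directly, which is the whole game.

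Integrating the resulting inequality over $[\tau_0,\tau]$ and using the elementary computation $\int_{\tau_0}^\tau s^{-q}\,ds = \frac{1}{q-1}\bigl(\tau_0^{-(q-1)}-\tau^{-(q-1)}\bigr)$, valid since $q>1$, yields
$$
\frac{\xi(\tau)}{\tau^{q-1}} \;\le\; \frac{\xi(\tau_0)}{\tau_0^{q-1}} + \frac{C}{q-1}\!\left(\frac{1}{\tau_0^{q-1}}-\frac{1}{\tau^{q-1}}\right).
$$
Multiplying through by $\tau^{q-1}$ and grouping the coefficient of $(\tau/\tau_0)^{q-1}$ gives \eqref{gron1} after one line of rearrangement.

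For \eqref{gron2}, I would substitute the bound \eqref{gron1} back into the hypothesis $\tau\xi'(\tau)\le(q-1)\xi(\tau)+C$. The constant term $-\tfrac{C}{q-1}$ coming from \eqref{gron1} is multiplied by $q-1$, producing $-C$, which cancels the free $+C$ on the right-hand side. What remains is exactly $\tau\xi'(\tau)\le\bigl[(q-1)\xi(\tau_0)+C\bigr](\tau/\tau_0)^{q-1}$. Dividing by $\tau$ and pulling one factor $\tau_0^{-1}$ out of the power to match the form stated in the lemma produces \eqref{gron2}.

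I do not foresee any real obstacle in this argument: everything reduces to the integrating-factor identity and an elementary power-function integration. The only point to be careful about is the direction of the inequality after dividing by $\tau^q$ and after integrating; both are preserved because $\tau>0$ and $q>1$, so $\tau^{-q}$ and $\tau^{1-q}$ have no sign issues. No additional regularity beyond $\xi\in C^1([\tau_0,\infty))$ is needed, since the integrated form of $(\xi/\tau^{q-1})'$ is just the fundamental theorem of calculus.
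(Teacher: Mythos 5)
Your argument is correct: the integrating factor $\tau^{-q}$ turns the hypothesis into $\frac{d}{d\tau}\bigl[\xi(\tau)\tau^{-(q-1)}\bigr]\le C\tau^{-q}$, integration over $[\tau_0,\tau]$ gives \eqref{gron1}, and feeding \eqref{gron1} back into the hypothesis (the $-C$ and $+C$ indeed cancel) gives \eqref{gron2} after writing $\tau^{-1}(\tau/\tau_0)^{q-1}=\tau_0^{-1}(\tau/\tau_0)^{q-2}$. The paper omits the proof and refers to the literature, where the lemma is established by exactly this integrating-factor computation, so your proposal coincides with the standard argument.
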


The following Lemma is contained in \cite{ALT}.

\begin{lemma}\label{alvino}
Let $f, g\in L^2(\Omega)$ be two positive functions. If 
\begin{equation}
    \int_{\Omega} fg\;dx=\int_{\Omega^\sharp} f^\sharp g^\sharp\;dx,
\end{equation}
    then, for every $\tau\geq 0$ there exists $t\geq 0$ such that we have, up to zero measure set,
    \begin{equation}
        \{ g>\tau \}=\{ f>t\}.    \end{equation}
\end{lemma}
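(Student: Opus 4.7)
The plan is to exploit the layer-cake representation of the product integral and convert the hypothesized equality into equality of intersection measures of superlevel sets. Writing $f(x)=\int_0^\infty \chi_{\{f>t\}}(x)\,dt$ and similarly for $g$, Fubini's theorem yields
\begin{equation*}
\int_\Omega fg\,dx=\int_0^\infty\!\!\int_0^\infty \abs{\{f>t\}\cap\{g>\tau\}}\,dt\,d\tau,
\end{equation*}
and the analogous identity for $f^\sharp g^\sharp$ on $\Omega^\sharp$. Since the Schwarz rearrangements have concentric ball superlevel sets, the symmetrized intersection has measure exactly $\min(\mu_f(t),\mu_g(\tau))$, where $\mu_f,\mu_g$ denote the distribution functions. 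On the other hand, the trivial bound $\abs{A\cap B}\le\min(\abs{A},\abs{B})$ combined with equidistribution gives the pointwise estimate
\begin{equation*}
\abs{\{f>t\}\cap\{g>\tau\}}\le\min(\mu_f(t),\mu_g(\tau))=\abs{\{f^\sharp>t\}\cap\{g^\sharp>\tau\}}.
\end{equation*}
The hypothesis therefore forces pointwise equality for a.e.\ $(t,\tau)\in[0,\infty)^2$.

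The next step is the elementary observation that the equality $\abs{A\cap B}=\min(\abs{A},\abs{B})$ forces $A\subset B$ or $B\subset A$ up to null sets (if, say, $\abs{A}\le\abs{B}$, then $\abs{A\setminus B}=\abs{A}-\abs{A\cap B}=0$). Applied level by level, this gives for a.e.\ $(t,\tau)$ a nesting of the two superlevel sets, with direction dictated by comparing $\mu_f(t)$ with $\mu_g(\tau)$. Using monotonicity and right-continuity of $\mu_f,\mu_g$, I would select a countable dense set of ``admissible'' levels at which nesting holds and promote the statement to every level by monotone limits.

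To extract the claim, I fix $\tau_0\ge 0$, let $m_0=\mu_g(\tau_0)$, and set $t^\ast=\inf\{t\ge 0:\mu_f(t)\le m_0\}$. Passing to monotone limits along admissible levels $t_n\searrow t^\ast$ and $t_n\nearrow t^\ast$ in the nesting yields
\begin{equation*}
\{f>t^\ast\}\subset\{g>\tau_0\}\subset\{f\ge t^\ast\}
\end{equation*}
up to null sets. If $\mu_f$ is continuous at $t^\ast$, the two bounding sets agree modulo $\{f=t^\ast\}$, which is null, and the conclusion holds with $t=t^\ast$. The main obstacle is a jump of $\mu_f$ strictly across $m_0$, i.e.\ a plateau $\{f=t^\ast\}$ of positive measure with $\mu_f(t^\ast)<m_0<\mu_f(t^{\ast-})$. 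To handle it I would revisit the equality in Hardy--Littlewood restricted to this plateau, which forces $g$ to be essentially constant there; consequently $\{g>\tau_0\}$ either contains or avoids the whole plateau, and in each case it coincides, up to a null set, with $\{f>t\}$ for an appropriate choice of $t$, closing the argument.
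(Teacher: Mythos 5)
Your layer-cake reduction is the right first move and matches the standard approach to equality in Hardy--Littlewood: Fubini converts the hypothesis into
\begin{equation*}
|\{f>t\}\cap\{g>\tau\}|=\min(\mu_f(t),\mu_g(\tau))\quad\text{for a.e. }(t,\tau),
\end{equation*}
and the elementary set-theoretic observation then yields nesting of the superlevel sets. Promoting the a.e.\ statement to a fixed $\tau_0$ by monotone limits along a co-null set of admissible levels is also a sound idea, although it needs a little more care than you give it (you must first approximate $\tau_0$ by admissible $\tau_m\searrow\tau_0$, and you should check how the associated thresholds $t_m^\ast$ converge). The paper itself gives no proof of this lemma; it simply quotes it from \cite{ALT}, so there is no internal argument to compare against.

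The plateau step, however, is a genuine gap, not a loose end. Your key claim --- that the Hardy--Littlewood equality ``restricted to the plateau'' forces $g$ to be essentially constant on $\{f=t^\ast\}$ --- is never justified, and it is in fact false. Take $f\equiv 1$ on $\Omega$ and $g$ any positive non-constant function in $L^2(\Omega)$. Then $f^\sharp\equiv 1$, and
\begin{equation*}
\int_\Omega fg\,dx=\int_\Omega g\,dx=\int_{\Omega^\sharp}g^\sharp\,dx=\int_{\Omega^\sharp}f^\sharp g^\sharp\,dx,
\end{equation*}
so the hypothesis holds, yet the plateau $\{f=1\}$ is all of $\Omega$ and $g$ need not be constant there. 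What one can actually extract from the equality on the plateau is only
\begin{equation*}
\int_{\{f=t^\ast\}}g\,dx=\int_{\mu_f(t^\ast)}^{\mu_f(t^{\ast-})}g^\ast(s)\,ds,
\end{equation*}
which does not constrain $g$ to be constant. The same example also shows that the lemma \emph{as stated} is false: for $\tau$ with $0<\mu_g(\tau)<|\Omega|$, the set $\{g>\tau\}$ cannot coincide, up to a null set, with $\{f>t\}$ for any $t$, since the latter is always $\Omega$ or $\emptyset$. What your argument genuinely proves is the sandwich $\{f>t^\ast\}\subset\{g>\tau_0\}\subset\{f\ge t^\ast\}$ up to null sets; to upgrade this to the stated conclusion one needs an additional hypothesis ruling out level sets of $f$ of positive measure (equivalently, continuity of $\mu_f$), or a weaker formulation of the conclusion. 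A secondary issue, even granting constancy of $g$ on the plateau: $\{f\ge t^\ast\}$ need not itself equal $\{f>t\}$ for any $t$, so ``contains the whole plateau'' does not automatically produce an admissible $t$.
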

We conclude this preliminary session, recalling the classical results contained in \cite{Brothers1988} (see Theorem $1.1$ and Lemma $2.3$). In particular, the result contained in \ref{3} of Lemma \eqref{brothers} gives the rigidity of the P\'olya-Szegő inequality (see \cite{pol}):
\begin{equation} \label{polya}
    \int_{\R^n} |\nabla u^\sharp|^p \, dx\le \int_{\R^n} \abs{\nabla u}^p  \, dx, \quad \forall u\in W^{1,p}(\mathbb{R}^n).
\end{equation}
\begin{teorema}\label{brothers}
Let $w\in W^{1,p}(\mathbb{R}^n)$, let $\sigma(t)$ be its distribution function 
and let 
$$w_M:=\begin{cases}
\norma{w}_{\infty} & \text{if } w\in L^\infty(\Omega)\\
+\infty & \text{otherwise.}
\end{cases}$$ 
Then, the following are true:
\begin{enumerate}[label=\roman{*}., ref=(\roman{*})]   
    \item \label{1}For almost all $t\in (0,w_M)$,
    \begin{equation}\label{brothers1}
        \infty>-\sigma'(t)\geq\displaystyle \int_{w^{-1}(t)}\dfrac{1}{|\nabla w|}d\mathcal{H}^{n-1}
    \end{equation}
     \item  \label{2} $\sigma$ is absolutely continuous if and only if 
     \begin{equation}\label{brothers2}
        \abs{\Set{\lvert\nabla w^\sharp\rvert =0}\cap\Set{0<w^\sharp<w_M}}=0.
     \end{equation}
     \item \label{3} If 
     \begin{equation}\label{hp_brothers3}
         \int_{\mathbb{R}^n} |\nabla w|^p= \int_{\mathbb{R}^n} |\nabla w^\sharp|^p,
     \end{equation}
     and \eqref{brothers2} holds, then there exist a translate of $w^\sharp$ which is almost everywhere equal to $w$.
\end{enumerate}
\end{teorema}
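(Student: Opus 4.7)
I would prove the three assertions separately, following the classical Brothers--Ziemer strategy.

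For \ref{1}, Lebesgue's theorem on monotone functions yields that $-\sigma'(t)$ exists and is finite for a.e. $t \in (0, w_M)$. For the lower bound, I would combine the layer-cake identity $\sigma(t) - \sigma(t+h) = |\{t < w \le t+h\}|$ with Federer's coarea formula applied on the regular set $\{|\nabla w| > 0\}$ with test function $1/|\nabla w|$; this yields
$$|\{t < w < t+h\} \cap \{|\nabla w| > 0\}| = \int_t^{t+h} ds \int_{w^{-1}(s)} \frac{d\mathcal{H}^{n-1}}{|\nabla w|}.$$
Since the left-hand side is bounded above by $\sigma(t) - \sigma(t+h)$, dividing by $h$ and passing to the limit at a Lebesgue point of the inner integral yields \eqref{brothers1}.

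For \ref{2}, the key observation is that $\sigma$ is equally the distribution function of the radial profile $w^\sharp(x) = w^*(\omega_n |x|^n)$. The monotone function $\sigma$ admits a Lebesgue decomposition into absolutely continuous, singular continuous, and jump parts; I would show that each non-AC contribution corresponds to a portion of the critical set of $w^\sharp$ in $\{0 < w^\sharp < w_M\}$. Concretely, a jump of $\sigma$ at a level $t$ corresponds to a level set $\{w^\sharp = t\}$ of positive Lebesgue measure, i.e. a spherical annulus where $|\nabla w^\sharp| = 0$; a singular continuous part corresponds to a set of radii on which the derivative of $w^*$ vanishes while $w^*$ still descends from $w_M$ toward $0$, which translates into a positive-measure critical subset of $\{0 < w^\sharp < w_M\}$. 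Carrying out this bookkeeping via $\sigma(t) = \omega_n \rho(t)^n$, with $\rho(t)$ the outer radius of $\{w^\sharp > t\}$, establishes both directions of \eqref{brothers2}.

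Part \ref{3} is the rigidity. Starting from the coarea identity $\int |\nabla w|^p\,dx = \int_0^{w_M} dt \int_{w^{-1}(t)} |\nabla w|^{p-1}\,d\mathcal{H}^{n-1}$, applying H\"older on each level set together with the isoperimetric inequality for $P(\{w > t\})$ and the bound from \ref{1} gives
$$\int_{\R^n} |\nabla w|^p\,dx \ge \int_0^{w_M} \frac{(n\omega_n^{1/n})^p\,\sigma(t)^{(n-1)p/n}}{(-\sigma'(t))^{p-1}}\,dt,$$
and under hypothesis \eqref{brothers2} the right-hand side is exactly $\int |\nabla w^\sharp|^p\,dx$, by the radial computation underlying part \ref{2}. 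Hypothesis \eqref{hp_brothers3} then forces equality at every step: the isoperimetric inequality is sharp for a.e. $t$, so each superlevel set $\{w > t\}$ is (equivalent to) a ball $B_{r(t)}(x_t)$; H\"older is sharp, so $|\nabla w|$ is essentially constant on each $w^{-1}(t)$; and \eqref{brothers1} is an equality.

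The main obstacle is to show that the centers $x_t$ can be taken independent of $t$. My plan is this: nesting of superlevel sets ($t_1 < t_2 \Rightarrow \{w > t_2\} \subset \{w > t_1\}$), combined with the strict radial monotonicity of $w^\sharp$ enforced by \eqref{brothers2}, makes the assignment $t \mapsto x_t$ well defined on a dense set of levels and Lipschitz in $t$ via the inclusion estimate $|x_{t_2} - x_{t_1}| \le r(t_1) - r(t_2)$. One must then rule out nontrivial variation of $x_t$: the $W^{1,p}$-quasicontinuous representative of $w$ has gradient of constant modulus on each spherical level set, yet the outward normal at $x \in \partial B_{r(t)}(x_t)$ points in the direction $(x - x_t)/|x - x_t|$, so any variation of $x_t$ would produce a contradiction with the matching of gradients across nearby levels. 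This forces $x_t \equiv x_0$ on a dense and hence on the full set of admissible levels, and the coincidence of (ball) level sets then gives $w = w^\sharp(\cdot - x_0)$ a.e., completing the proof.
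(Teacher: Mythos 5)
First, note that the paper does not prove this statement at all: Theorem \ref{brothers} is quoted verbatim from Brothers--Ziemer \cite{Brothers1988} (their Theorem 1.1 and Lemma 2.3) and is used as a black box precisely because its proof is long and delicate. So the comparison here is with the classical proof, not with anything in the paper. Your treatment of \ref{1} (coarea formula on $\{\abs{\nabla w}>0\}$ with weight $1/\abs{\nabla w}$, then differentiation of $\sigma$ at Lebesgue points) and your one-dimensional bookkeeping for \ref{2} via $\sigma(t)=\omega_n\rho(t)^n$ are the standard arguments and are essentially sound, although \ref{2} is only sketched: the correspondence between the singular continuous part of $\sigma$ and a positive-measure critical set of $w^\sharp$ is exactly the point that needs a careful argument about a monotone function and its generalized inverse, and you state it rather than prove it.

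The genuine gap is in \ref{3}, and it is located exactly where the Brothers--Ziemer theorem is hard: the concentricity of the balls $B_{r(t)}(x_t)$. Up to that point your chain (equality in the isoperimetric inequality for a.e.\ level, $\abs{\nabla w}$ constant $\mathcal{H}^{n-1}$-a.e.\ on a.e.\ level set, equality in \eqref{brothers1}) is correct, but the final step --- ``the outward normal at $x\in\partial B_{r(t)}(x_t)$ points in the direction $(x-x_t)/\abs{x-x_t}$, so any variation of $x_t$ would contradict the matching of gradients across nearby levels'' --- is an assertion, not an argument. For a general $w\in W^{1,p}(\mathbb{R}^n)$ there is no pointwise gradient or normal on the level spheres to ``match'': $\abs{\nabla w}$ is only known to be constant for a.e.\ $t$ and $\mathcal{H}^{n-1}$-a.e.\ on $w^{-1}(t)$, the topological boundaries of the superlevel sets are only determined up to negligible sets, and the map $t\mapsto x_t$ is a priori defined only for a.e.\ $t$. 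The nesting estimate $\abs{x_{t_2}-x_{t_1}}\le r(t_1)-r(t_2)$ controls the drift of the centers by the decay of the radii but does not exclude it; indeed, if one drops hypothesis \eqref{brothers2} there are well-known counterexamples in which all superlevel sets are balls whose centers move (glued through flat zones), so any correct proof must make \eqref{brothers2} do real quantitative work at precisely this step, whereas in your sketch it enters only through the phrase ``strict radial monotonicity''. Closing this gap is the core content of \cite{Brothers1988} (and of the later proofs by Cianchi--Fusco type arguments), which require fine properties of Sobolev functions, precise representatives and a careful use of the coarea formula; as written, your proposal assumes the conclusion of that analysis rather than supplying it.
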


\begin{oss} \label{remfus} We observe that in \cite{fusco}, it is proved that the condition
  \begin{equation}\label{bro}
        \abs{\Set{\abs{\nabla w} =0}\cap\Set{0<w<w_M}}=0
     \end{equation}
     implies \eqref{brothers2}. 
So, by \ref{3} in Lemma \ref{brothers}, if we have \eqref{hp_brothers3} and \eqref{bro}, there exists a translated of $w^\sharp$ which is almost everywhere equal to $w$. 
\end{oss}

\begin{oss} We observe that the P\'olya -Szegő inequality \eqref{polya} and the relative rigidity result \ref{3} contained in Lemma \ref{brothers} hold also if we assume $w\in W^{1,p}_0(\Omega)$. Indeed, it is easily proved that for every $w\in W^{1,p}_0(\Omega)$ one has $w^\sharp\in W^{1,p}_0(\Omega^\sharp)$.
\end{oss}

\section{Proof of Theorem \ref{th1}}
In order to prove the main Theorem \ref{th1}, we divide the proof into the following steps. First of all, we prove that, under the assumptions of Theorem \ref{th1}, equality holds in \eqref{talentimu} and this is the content of Proposition \ref{thnorme}. Then, in Proposition \ref{teorema1}, we prove that equality in \eqref{talentimu} implies the fact that $\Omega$ is a ball and $u$ and $f$ are radial functions.
In order to prove this last step, 
we need the key Lemma \ref{dirichlet_lemma}.

\begin{prop}
\label{thnorme}
Let $u$ be the solution to \eqref{rob1} and let $v$ be the solution to \eqref{rob2}. If there exists $k$ 
$$ k\in \left]0, \frac{n(p-1)}{(n-2)p+n}\right] \quad \text{such that } \quad \norma{u}_{L^{pk,p}(\Omega)}=\norma{v}_{L^{pk,p}(\Omega^\sharp)},$$
then equality holds in \eqref{talentimu} for almost every $t$.
\end{prop}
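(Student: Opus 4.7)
The plan is to retrace the derivation of the Lorentz norm bound \eqref{fgen2} given in \cite{AGM} and verify that, under the equality assumption \eqref{nolo}, every intermediate inequality, in particular \eqref{talentimu}, must be an equality. The key observation is that the bound $\norma{u}_{L^{pk,p}(\Omega)} \leq \norma{v}_{L^{pk,p}(\Omega^\sharp)}$ is obtained by concatenating the pointwise inequality \eqref{talentimu}, the integrated boundary estimate \eqref{intmu} from Lemma \ref{lemma3.3}, and a Gronwall step (Lemma \ref{lem_Gronwall}), while the analogous chain for the radial solution $v$, based on \eqref{talentiphi} and \eqref{intfi}, consists entirely of equalities. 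Hence the difference of the $p$-th powers of the two Lorentz norms decomposes as a nonnegative sum of error terms, and \eqref{nolo} forces each term to vanish.

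Concretely, via Cavalieri's principle one writes
\[
\norma{u}_{L^{pk,p}(\Omega)}^p = pk \int_0^\infty t^{p-1}\mu(t)^{1/k}\,dt, \qquad \norma{v}_{L^{pk,p}(\Omega^\sharp)}^p = pk \int_0^\infty t^{p-1}\phi(t)^{1/k}\,dt.
\]
On $[0,v_m]$ one has $\phi(t)=\abs{\Omega}$ and $\mu(t)\leq\abs{\Omega}$ by \eqref{mf}, so the contribution to the excess on this range is handled directly. On $[v_m,\infty)$ I multiply \eqref{talentimu} by an appropriate weight in $t$ and $\mu(t)$ and integrate, using \eqref{intmu} to absorb the boundary term $\beta^{-1/(p-1)} \int_{\partial U_t^\text{ext}} u^{-1}\,d\mathcal{H}^{n-1}$, obtaining a Gronwall-type inequality $\tau\xi_u'(\tau)\leq(q-1)\xi_u(\tau)+C$ for an auxiliary function $\xi_u$ built from $\mu$, with exponent $q$ determined by $k$, $n$, $p$. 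The restriction $k\leq n(p-1)/((n-2)p+n)$ corresponds precisely to the condition $q>1$ required by Lemma \ref{lem_Gronwall}. The same construction applied to $\phi$ via \eqref{talentiphi} and \eqref{intfi} produces $\xi_v$ satisfying the same ODE with equality; thus $\xi_u \leq \xi_v$ on $[v_m,\infty)$, and a final integration recovers \eqref{fgen2}.

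Imposing \eqref{nolo} makes the total excess zero. Since each inequality in the chain (the pointwise \eqref{talentimu}, the boundary estimate \eqref{intmu}, and the Gronwall step) contributes nonnegatively to that excess, each must be saturated; in particular \eqref{talentimu} holds with equality for a.e.\ $t$, which is the claim. The main technical obstacle is the correct identification of the auxiliary function $\xi_u$ together with the careful bookkeeping of equality cases through the integrations by parts and the exponentiation hidden in the Gronwall step, in order to be sure that no intermediate estimate can hide slack that could vanish independently of equality in \eqref{talentimu}. A byproduct of the argument will be that \eqref{intmu} itself holds with equality for every $\tau \geq v_m$, which together with pointwise equality in \eqref{talentimu} is exactly the input needed by Proposition \ref{teorema1}.
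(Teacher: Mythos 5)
Your overall strategy matches the paper's: rewrite the Lorentz norms via Cavalieri's principle, multiply \eqref{talentimu} by the weight $t^{p-1}\mu(t)^\alpha$ with $\alpha=\tfrac1k-(1-\tfrac1n)\tfrac{p}{p-1}$, control the exterior-boundary term by $\mu\le|\Omega|$ together with \eqref{intmu}, invoke the Gronwall-based comparison from \cite{AGM} to get the reverse integral inequality, and squeeze to force equality in \eqref{talentimu} a.e. The ``sum of nonnegative error terms'' framing is a legitimate recasting of the paper's chain of inequalities sandwiched between the two equal Lorentz norms, so the proposal is substantially correct and follows the same route.

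Two points of detail are misattributed, though, and you should fix them before writing the argument out. First, the constraint $k\le \tfrac{n(p-1)}{(n-2)p+n}$ is not ``precisely the condition $q>1$'' in Lemma~\ref{lem_Gronwall}: in the Gronwall step of \cite{AGM} (reproduced in the proof) one always takes $q=2$, so $q>1$ is automatic; the constraint on $k$ is instead what guarantees that the auxiliary function $h(l)=F(l)\,l^{-\frac{(n-1)p}{n(p-1)}}$, with $F(l)=\int_0^l \omega^\alpha\bigl(\int_0^\omega f^*\bigr)^{1/(p-1)}d\omega$, is nondecreasing; this monotonicity is what lets one multiply \eqref{talentimu} by $t^{p-1}F(\mu)\mu^{-\frac{(n-1)p}{n(p-1)}}$ and still obtain the differential inequality $\tau\xi'\le\xi+C$. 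Second, Proposition~\ref{teorema1} requires only equality in \eqref{talentimu} as input; equality in \eqref{intmu} is indeed a byproduct of the saturated chain, but it is not used downstream, so the claim that it is ``exactly the input needed'' overstates the dependency.
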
 

\begin{proof}
Since we are assuming that $\norma{u}_{L^{pk,p}(\Omega)}=\norma{v}_{L^{pk,p}(\Omega^\sharp)}$, we have that
\begin{equation}\label{uguaglianza_norme}
    \int_0^{+\infty} t^{p-1}\mu(t)^{\frac{1}{k}}\, dt=\int_0^{+\infty} t^{p-1}\phi(t)^{\frac{1}{k}}\, dt.
\end{equation}
Let us multiply \eqref{talentimu} by $t^{p-1}\mu(t)^\alpha$, where $ \displaystyle \alpha={\frac{1}{k}-\left(1-\frac{1}{n}\right)\frac{p}{p-1}}$, and let us integrate from 0 to $+\infty$: 

\begin{equation}\label{talenti_mu2}
\begin{aligned}
    &\gamma_n \int_0^{+\infty} t^{p-1}\mu^{\frac{1}{k}}(t)\, dt  \\
    \le&\int_0^{+\infty}  \left(\int_0^{\mu (t)}f^\ast (s ) \, ds\right)^{\frac{1}{p-1}} \left( - \mu'(t) + \frac{1}{\beta ^{\frac{1}{p-1}}}\int_{\partial U_t^{ext}} \frac{1}{u} \, d\mathcal{H}^{n-1}\right)t^{p-1} \mu(t)^{\alpha}\, dt\\
    \le & \int_0^{+\infty} t^{p-1} \mu(t)^{\alpha} \left(\int_0^{\mu (t)}f^\ast (s ) \, ds\right)^{\frac{1}{p-1}}(-\mu'(t))\, dt+ \frac{\abs{\Omega}^\alpha}{p\beta ^{\frac{p}{p-1}}} \left(\int_0^{\abs{\Omega}}f^\ast (s ) \, ds\right)^{\frac{p}{p-1}},
\end{aligned}
\end{equation}

where in the last inequality we have used $\mu(t)\le \abs{\Omega}$ and \eqref{intmu} in Lemma \ref{lemma3.3}. As far as $v$ is concerned, it holds
\begin{equation}\label{talentifi2}
    \begin{aligned}
    &\gamma_n \int_0^{+\infty} t^{p-1}\phi^{\frac{1}{k}}(t)\, dt\\ &= \int_0^{+\infty}  \left(\int_0^{\phi (t)}f^\ast (s ) \, ds\right)^{\frac{1}{p-1}} \left( - \phi'(t) + \frac{1}{\beta ^{\frac{1}{p-1}}}\int_{\partial V_t^{ext}} \frac{1}{u} \, d\mathcal{H}^{n-1}\right)t^{p-1} \phi(t)^{\alpha}\, dt\\
    &= \int_0^{+\infty} t^{p-1} \phi(t)^{\alpha} \left(\int_0^{\phi (t)}f^\ast (s ) \, ds\right)^{\frac{1}{p-1}}(-\phi'(t))\, dt+ \frac{\abs{\Omega}^\alpha}{p\beta ^{\frac{p}{p-1}}} \left(\int_0^{\abs{\Omega}}f^\ast (s ) \, ds\right)^{\frac{p}{p-1}}.
    \end{aligned}
\end{equation}
We observe that the left-hand-side of \eqref{talenti_mu2} and the left-hand-side of \eqref{talentifi2} are equal from  \eqref{uguaglianza_norme}. So, it follows
\begin{equation}\label{prima_dis}
\begin{aligned}
\hspace{-0.7em}\int_0^{+\infty} \!\!\! \!\!\! t^{p-1} \phi(t)^{\alpha} \left(\int_0^{\phi(t)}f^\ast (s ) \, ds\right)^{\frac{1}{p-1}} \!\!\! \!\!\! (-\phi'(t))\, dt
    \le &\int_0^{+\infty} \!\!\! \!\!\! t^{p-1} \mu(t)^{\alpha} \left(\int_0^{\mu (t)}f^\ast (s ) \, ds\right)^{\frac{1}{p-1}} \!\!\!\!\! (-\mu'(t))\, dt.
\end{aligned}
\end{equation}
Setting $\displaystyle{F(l)= \int_0^l \omega^\delta \Biggl( \int_0^\omega f^\ast (s) \, ds \Biggr)^{\frac{1}{p-1}} \! \!\! d\omega}$, and integrating \eqref{prima_dis} by parts, we get

\begin{equation*}
      \int_0^\infty t^{p-2} F(\phi(t)) \, dt \leq \int_0^\infty t^{p-2} F(\mu(t)) \, dt,
\end{equation*}
being $\mu(t)=\phi(t)=0$ for $t>v_M$. In \cite{AGM} (see the proof of Theorem $1.1$), it is proved that 
\begin{equation}\label{ineq_desired1}
        \int_0^\infty t^{p-2} F(\mu(t)) \, dt \leq \int_0^\infty t^{p-2} F(\phi(t)) \, dt.
\end{equation}
and we recall here the proof for the reader's convenience.
In order to do that, we multiply \eqref{talentimu} by $\displaystyle{t^{p-1}F(\mu(t)) \mu(t)^{-\frac{(n-1)p}{n(p-1)} }}$ and we integrate between $0$ and $\tau>v_m$. First, we observe that, by the hypothesis $\displaystyle{k \leq \frac{n(p-1)}{(n-2)p+n}}$, it follows that the function $\displaystyle{h(l)= F(l)l^{-\frac{(n-1)p}{n(p-1)}}}$ is non decreasing. Hence, we obtain
\begin{equation*}
	\begin{split}
		\int_0^\tau \gamma_n t^{p-1}F(\mu(t)) \, dt &\leq  	\int_0^\tau \Bigl(- \mu'(t) \Bigr) t^{p-1}\mu(t)^{-\frac{(n-1)p}{n(p-1)} } F(\mu(t)) \left(\int_0^{\mu (t)}f^\ast (s ) \, ds\right)^{\frac{1}{p-1}} \, dt \\
		&+F(\abs{\Omega})\frac{\abs{\Omega}^{-\frac{p(n-1)}{n(p-1)} }}{p\beta ^{\frac{p}{p-1}}} \left(\int_0^{\abs{\Omega}}f^\ast (s ) \, ds\right)^{\frac{p}{p-1}}.	
	\end{split} 
\end{equation*}

\noindent If we integrate by parts both sides of the last expression and we set
$$\displaystyle{C=F(\abs{\Omega})\frac{\abs{\Omega}^{-\frac{p(n-1)}{n(p-1)} }}{p\beta ^{\frac{p}{p-1}}} \left(\int_0^{\abs{\Omega}}f^\ast (s ) \, ds\right)^{\frac{p}{p-1}}},$$
\noindent we obtain
\begin{equation}
\label{Solidus_Snake}
\tau \int_0^{\tau} \gamma_n t^{p-2} F (\mu(t)) \, dt + \tau H_\mu(\tau) \leq \int_{0}^{\tau} \int_0^t r^{p-2} F(\mu(r)) \, dr dt+ \int_0^{\tau} H_\mu(t) \, dt +C,
\end{equation}
where
\[
H_\mu(\tau)=-\int_{\tau}^{+\infty} t^{p-2} \mu(t)^{-\frac{p(n-1)}{n(p-1)}} F(\mu(t)) \biggl( \int_0^{\mu(t)} f^*(s) \, ds \biggr)^{\frac{1}{p-1}} \, d\mu(t).
\]
Setting now
\begin{equation*}
	\begin{multlined}
		\xi(\tau)=\int_0^\tau \int_0^t \gamma_n r^{p-2}F(\mu(r)) \, dr +\int_0^t H_{\mu}(t) \, dt , 
	\end{multlined} 
\end{equation*}
 inequality \eqref{Solidus_Snake} becomes
$$
\tau \xi'(\tau ) \leq \xi(\tau) +C.
$$
So, Lemma \ref{lem_Gronwall}, with $\tau_0=v_m$ and q=2, gives
$$\int_{0}^{\tau} \gamma_n t^{p-2} F(\mu(t)) \, dt + H_\mu(\tau) \le \left( \frac{\displaystyle{\int_{0}^{v_m} t^{p-2} F(\mu(t) \, dt +H_\mu(v_m) +C}}{v_m}\right).$$

\noindent Of course, the inequality holds as equality if we replace $\mu(t)$ with $\phi(t)$, so we get:
$$
\int_{0}^{\tau} \gamma_n t^{p-2} F(\mu(t)) \, dt + H_\mu(\tau)\le \int_{0}^{\tau} \gamma_n F(\phi(t)) \, dt +H_\phi(\tau),
$$
keeping in mind that $\mu(t)\le \phi(t)= \abs{\Omega} $ for $t \leq v_m$.
Now, letting $\tau \to \infty$, one has
$$
	\int_0^\infty t^{p-2} F(\mu(t)) dt \leq \int_0^\infty t^{p-2} F(\phi(t)) dt,
$$
since
  $H_\mu(\tau), H_\phi(\tau) \to 0$.
  
So, we get equality in \eqref{talenti_mu2} and, consequently, in \eqref{talentimu} for almost every $t$, indeed

\begin{align*}
    &\gamma_n \int_0^{+\infty}  t^{p-1}\mu^{\frac{1}{k}}(t)\, dt \\
    &\le\int_0^{+\infty}  \left(\int_0^{\mu (t)}f^\ast (s ) \, ds\right)^{\frac{1}{p-1}} \left( - \mu'(t) + \frac{1}{\beta ^{\frac{1}{p-1}}}\int_{\partial U_t^{ext}} \frac{1}{u} \, d\mathcal{H}^{n-1}\right)t^{p-1} \mu(t)^{\alpha}\, dt\\[1ex]
    & \le \int_0^{+\infty} t^{p-2} F(\mu(t))\, dt+ \frac{\abs{\Omega}^\alpha}{p\beta ^{\frac{p}{p-1}}} \left(\int_0^{\abs{\Omega}}f^\ast (s ) \, ds\right)^{\frac{p}{p-1}}\\[1ex]
    &=\int_0^{+\infty} t^{p-2} F(\phi(t))\, dt+\frac{\abs{\Omega}^\alpha}{p\beta ^{\frac{p}{p-1}}} \left(\int_0^{\abs{\Omega}}f^\ast (s ) \, ds\right)^{\frac{p}{p-1}}\\[1ex]
    &=\int_0^{+\infty} \left(\int_0^{\phi (t)}f^\ast (s ) \, ds\right)^{\frac{1}{p-1}} \left( - \phi'(t) + \frac{1}{\beta ^{\frac{1}{p-1}}}\int_{\partial V_t^{ext}} \frac{1}{v} \, d\mathcal{H}^{n-1}\right)t^{p-1} \phi(t)^\alpha\, dt\\[1ex]
    &=\gamma_n \int_0^{+\infty}  t^{p-1}\phi^{\frac{1}{k}}(t)\, dt=\gamma_n \int_0^{+\infty}  t^{p-1}\mu^{\frac{1}{k}}(t)\, dt.
\end{align*}

\end{proof}

In the following Lemma 
 we prove that a solution to a Dirichlet problem, such that its distribution function satisfies the differential equation \eqref{talentisigma}, is necessarily defined on a ball and it has to be radial and decreasing.

\begin{lemma}\label{dirichlet_lemma}
Let $\Omega\subset\R^n$ be an open, bounded and Lipschitz set. Let $f\in L^{p'}(\Omega)$ be a positive function, let $w$ be a weak solution to
\begin{equation}
\label{dirichlet}
\begin{cases}
-\Delta_p w= f & \text{ in } \Omega \\
w =0  & \text{ on } \partial \Omega,
\end{cases}
\end{equation}
and let $\sigma$ be the distribution function of $w$. If $\sigma$ satisfies the following condition
\begin{equation} 
	\label{talentisigma}
	\gamma_n \sigma(t)^{\left(1-\frac{1}{n}\right)\frac{p}{p-1}} = \left(\int_0^{\sigma (t)}f^\ast (s ) \, ds\right)^{\frac{1}{p-1}} \left( - \sigma'(t) \right), \quad \text{for a.e. } t\in[0,w_M]
	\end{equation}
 then, there exists $x_0$ such that

$$\Omega=\Omega^\sharp +x_0, \quad w(\cdot +x_0)=w^\sharp(\cdot), \quad f(\cdot+x_0)=f^\sharp(\cdot).$$
\end{lemma}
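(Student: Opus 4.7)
The plan is to unwind the chain of inequalities behind \eqref{talentimu} in the proof of Lemma \ref{key}, specialized to the Dirichlet setting. Since $w=0$ on $\partial\Omega$, one has $\partial U_t^{ext}=\emptyset$ for every $t>0$, so \eqref{talentisigma} is obtained from four steps: (A) the isoperimetric inequality $n\omega_n^{1/n}\sigma(t)^{1-1/n}\le P(U_t)$; (B) H\"older's inequality on $\partial U_t^{int}$ applied to $g=|\nabla w|^{p-1}$; (C) the Brothers-Ziemer inequality $-\sigma'(t)\ge\int_{\{w=t\}}|\nabla w|^{-1}\,d\mathcal{H}^{n-1}$ from Theorem \ref{brothers}~\ref{1}; and (D) the Hardy-Littlewood inequality $\int_{U_t} f\le\int_0^{\sigma(t)} f^\ast$. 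The hypothesis that \eqref{talentisigma} holds as an equality for a.e.\ $t$ forces simultaneous equality in each of (A)--(D).

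Next I extract the structural information encoded in these equalities. Equality in (A) says that $U_t$ is, up to negligible sets, a ball of radius $r(t)=(\sigma(t)/\omega_n)^{1/n}$ for a.e.\ $t\in(0,w_M)$. Equality in (B) forces $|\nabla w|$ to be a positive constant $c(t)$ on $\{w=t\}$, and combining (A)--(C) yields
\begin{equation*}
c(t)=\frac{n\omega_n\,r(t)^{n-1}}{-\sigma'(t)},
\end{equation*}
which is exactly the value of $|\nabla w^\sharp|$ on $\{w^\sharp=t\}$. Equality in (D), read through Lemma \ref{alvino} with the choice $g=\chi_{U_t}$, produces a nested identification between the superlevel sets of $f$ and of $w$: for every $s\ge 0$ there exists $t(s)\ge 0$ such that $\{f>s\}=U_{t(s)}$ up to null sets.

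To conclude I need to upgrade ``ball-shaped level sets'' to ``concentric ball-shaped level sets''. For this I show that the P\'olya-Szeg\H{o} inequality \eqref{polya} holds as an equality and then apply the rigidity statement in Theorem \ref{brothers}~\ref{3}. Using the coarea formula together with the explicit formula for $c(t)$ above, a direct computation gives $\int_\Omega|\nabla w|^p\,dx=\int_{\Omega^\sharp}|\nabla w^\sharp|^p\,dx$. Moreover, the positivity of $f$ together with \eqref{talentisigma} makes $-\sigma'(t)$ strictly positive and finite for a.e.\ $t\in(0,w_M)$, so $c(t)>0$ there and the non-degeneracy condition \eqref{bro} holds. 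Remark \ref{remfus} and Theorem \ref{brothers}~\ref{3} then provide some $x_0\in\R^n$ with $w(\cdot)=w^\sharp(\cdot-x_0)$, which gives $\Omega=\Omega^\sharp+x_0$ and $w(\cdot+x_0)=w^\sharp$; the level-set correspondence coming from (D) finally transfers the radial symmetry to $f$, yielding $f(\cdot+x_0)=f^\sharp$. The main obstacle is this concentricity step: the isoperimetric equality alone only produces ball-shaped level sets with possibly different centers, and it is the coincidence $|\nabla w|=|\nabla w^\sharp|$ on corresponding level sets, together with \eqref{bro}, that allows Brothers-Ziemer to pin down a single common center.
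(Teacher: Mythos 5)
Your decomposition into equalities in (A) isoperimetric, (B) H\"older, (C) the derivative inequality of Theorem \ref{brothers}~\ref{1}, and (D) Hardy--Littlewood is exactly the one in the paper, and the endgame (coarea to get P\'olya--Szeg\H{o} equality, Brothers--Ziemer rigidity for $w$, Lemma~\ref{alvino} for $f$) matches the paper's Steps 2 and 3. However, there is a genuine gap where you claim that ``$-\sigma'(t)$ strictly positive and finite for a.e.\ $t$, so $c(t)>0$ there and the non-degeneracy condition \eqref{bro} holds.'' The constant $c(t)$ is the $\mathcal{H}^{n-1}$-a.e.\ value of $|\nabla w|$ on the sphere $\partial W_t$; knowing $c(t)>0$ for a.e.\ $t$ says nothing about the Lebesgue measure of $\{|\nabla w|=0\}\cap\{0<w<w_M\}$. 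In particular, a plateau $\{w=t_0\}$ of positive $n$-dimensional measure at a single exceptional level $t_0$ (equivalently, a jump of $\sigma$ at $t_0$) is perfectly consistent with \eqref{talentisigma} holding a.e.\ and with $c(t)>0$ a.e.\ in $t$, yet it would make \eqref{bro} fail outright. Likewise, a Cantor part of $\sigma$ is not excluded by a.e.\ statements about $\sigma'$. So \eqref{bro} does not follow from what you wrote.

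The paper sidesteps this by never trying to control $|\nabla w|$ off the spheres: it integrates the ODE \eqref{talentisigma} to identify $w^\sharp$ with the explicit radial solution $z$ on $\Omega^\sharp$, reads off from the explicit formula for $z$ that $|\nabla w^\sharp|>0$ on $\{0<w^\sharp<w_M\}$, i.e.\ that \eqref{brothers2} holds, and then uses Theorem~\ref{brothers}~\ref{2} to conclude $\sigma$ is absolutely continuous. Only after this does it equate $-\sigma'(t)=P(\partial W_t)/C_t=P(\partial W_t^\sharp)/C_t^\sharp$, obtain $C_t=C_t^\sharp$, and run the coarea/P\'olya--Szeg\H{o}/Brothers--Ziemer argument. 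Notice the condition checked is \eqref{brothers2}, on the \emph{rearranged} function $w^\sharp$, which is explicit; checking \eqref{bro} on $w$ itself (as you attempt, via Remark~\ref{remfus}) requires first ruling out plateaus and Cantor parts, which you would need to argue separately (for instance, plateaus can be excluded because $w$ is constant on an open set would force $\Delta_p w=0\ne f>0$ there, but the Cantor part needs the route through \eqref{brothers2}). Your proposal should insert the identification $w^\sharp=z$ and the deduction of absolute continuity of $\sigma$ before appealing to Brothers--Ziemer rigidity.
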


\begin{proof}
First of all, we recall that $w$ is a weak solution to \eqref{dirichlet} if and only if 
\begin{equation}
    \label{wikdir}
    \int_{\Omega} \abs{\nabla w}^{p-2}\nabla w \nabla \varphi \, dx = \int_{\Omega} f \varphi \, dx, \qquad \forall \varphi \in W_0^{1,p}(\Omega).
\end{equation}
Arguing as in the proof of \eqref{talentimu} in Lemma \ref{key}, choosing the same test function $\varphi$, defined in \eqref{phi},
\begin{equation*}
	\left.
	\varphi (x)= 
	\right.
	\begin{cases}
	0 & \text{ if }w < t \\
	w-t & \text{ if }t< w < t+h \\
	h & \text{ if }w > t+h,
	\end{cases} 
	\end{equation*}
	one obtains
\begin{equation}\label{fmagg}
    \int_{\partial W_t} \abs{\nabla w}^{p-1} \, d\mathcal{H}^{n-1}=\int_{W_t} f(x) \, dx\le \int_0^{\sigma(t)} f^\star(s)\, ds,
\end{equation}
where $W_t=\{x\in\Omega\;:w(x)>t\}$.

If we apply the isoperimetric inequality to the superlevel set $W_t$, the H\"older inequality and the Hardy-Littlewood inequality, we get, for almost every $t$,

\begin{align}
	\label{isop2}
	    n \omega_n^{\frac{1}{n}}  \sigma(t)^{\frac{n-1}{n}}& \leq P(W_t) = \int_{\partial W_t} \,  d\mathcal{H}^{n-1}\\
	    \label{holder2}
	    &\leq\left(\int_{\partial W_t} \abs{\nabla w}^{p-1}\, d\mathcal{H}^{n-1}(x)\right)^{\frac{1}{p}} \left(\int_{\partial W_t}\frac{1}{\abs{\nabla w}} \, d\mathcal{H}^{n-1}(x)\right)^{1-\frac{1}{p}} \\
	\label{hardy2}
        &\leq \left(\int_0^{\sigma(t)} f^\ast (s) \, ds\right)^{\frac{1}{p}} \left( -\sigma'(t) \right)^{1-\frac{1}{p}}.
	\end{align}
So, hypothesis \eqref{talentisigma} ensures us that equality holds in the isoperimetric inequality \eqref{isop2}, in the H\"older inequality \eqref{holder2} and in the Hardy-Littlewood inequality \eqref{hardy2}. \\
We now divide the proof into three steps.\\
\textbf{Step 1}. Let us prove that the superlevel set $\Set{w>t}$ is a ball for all $t\in [0, w_M)$. 
Equality in \eqref{isop2} implies that, for almost every $t$, 
$W_t$ is a ball. On the other hand, for all $t\in [0, w_M)$, there exists a sequence $\Set{t_k}$ such that
\begin{enumerate}
    \item $t_k\to t$;
    \item $t_k>t_{k+1}$;
    \item $\{w>t_k\}$ is a ball for all $k$. 
\end{enumerate}
Since $\displaystyle{\Set{w>t}=\cup_k\Set{w>t_k}}$ can be written as an increasing union of balls, $\{w>t\}$ is a ball for all $t$ and, in particular, $\Om=\{w>0\}$ is a ball too and we obtain that $\Om=x_0+\Om^\sharp$. \\
From now on, we can assume without loss of generality that $x_0=0$.\\
\textbf{Step 2.} Let us prove that the superlevel sets are concentric balls. \\
Equality in \eqref{holder2} implies also equality in H\"older inequality, i.e. 
$$\int_{\partial W_t} \,  d\mathcal{H}^{n-1}
	=\left(\int_{\partial W_t} \abs{\nabla w}^{p-1}\, d\mathcal{H}^{n-1}(x)\right)^{\frac{1}{p}} \left(\int_{\partial W_t}\frac{1}{\abs{\nabla w}} \, d\mathcal{H}^{n-1}(x)\right)^{1-\frac{1}{p}}. $$
 This means that, for almost every $t$, $\abs{\nabla w}$ is constant $\mathcal{H}^{n-1}-$almost everywhere
on $\partial W_t$ , and we denote by $C_t$ the ($\mathcal{H}^{n-1}-$a.e.) constant value of $\abs{\nabla w}$ on $\partial W_t$. We claim that $C_t\neq 0$ for almost every $t$. Indeed, \eqref{fmagg} and the positivity of $f$ ensure us that
$$
    P(W_t) C_t^{p-1}=\int_{\partial W_t} \abs{\nabla w}^{p-1} \, d\mathcal{H}^{n-1}=\int_{W_t} f(x) \, dx>0.$$
Integrating \eqref{talentisigma}, we obtain $w^\sharp(x)=z(x)$, for all $x\in \Omega^\sharp$, where $z$ is the solution to
\begin{equation}
   \begin{cases}
-\Delta_p z= f^\sharp & \text{ in } \Omega^\sharp \\
z =0  & \text{ on } \partial \Omega^\sharp,
\end{cases}
\end{equation}
    and it has the following explicit form:
    
    \begin{equation*}
        z(x)=\int_{\omega_n \abs{x}^n}^{\abs{\Omega}} \frac{1}{\gamma_n} \left(\int_0^s f^\star(r)\, dr\right)^{1/(p-1)}\frac{1}{s^{(1-1/n)(p/(p-1))}} \, ds,
    \end{equation*}
    so it easily follows that
         \begin{equation}\label{pezzotto}
        \abs{\Set{\lvert\nabla w^\sharp\rvert =0}\cap\Set{0<w^\sharp<w_M}}=0.
     \end{equation}
Using \ref{2} in Lemma \ref{brothers}, 
we have that \eqref{pezzotto} 
implies the absolutely continuity of $\sigma$. 

Now, we denote by $C^\sharp_t$ the ($\mathcal{H}^{n-1}-$a.e.) constant value of $\abs{\nabla w^\sharp}$ on $\partial W^\sharp_t$.
We recall that it holds
\begin{equation*}
    -\sigma'(t)= \int_{\partial W^\sharp_t} \frac{1}{\abs{\nabla w^\sharp}}=\frac{P(\partial W^\sharp_t)}{C^\sharp_t}.
\end{equation*}
and, by the absolutely continuity of $\sigma$, we have
\begin{equation*}
  -\sigma'(t) = \int_{\partial W_t} \frac{1}{\abs{\nabla w}}=\frac{P(\partial W_t)}{C_t}.
\end{equation*}
Since $w$ and $w^\sharp$ are equi-distributed, we have,
\begin{equation*}
    \frac{P(\partial W_t)}{C_t}=\frac{P(\partial W^\sharp_t)}{C^\sharp_t}
\end{equation*}
 Moreover, since $P(\partial W_t)=P(\partial W^\sharp_t)$, we have that $C_t=C^\sharp_t$. So, by the coarea formula, we get

\begin{equation*}
\begin{aligned}
\int_\Omega \abs{\nabla w}^p\, dx &=\int_0^{+\infty} \int_{\partial W_t} \abs{\nabla w}^{p-1} \, d\mathcal{H}^{n-1}=\int_0^{+\infty} C_t^{p-1}P(W_t)\, dt \, d\mathcal{H}^{n-1}\\
& =\int_0^{+\infty} \left(C_t^\sharp\right)^{p-1}P(W_t)\, dt \, d\mathcal{H}^{n-1}= \int_0^{+\infty} \int_{\partial W^\sharp_t} \lvert \nabla w^\sharp \rvert^{p-1} \, d\mathcal{H}^{n-1}=\int_{\Omega^\sharp} \lvert \nabla w^\sharp \rvert^p\, dx.
\end{aligned}
\end{equation*}
    By \ref{3} in Lemma \ref{brothers}, we conclude that $u=u^\sharp$.
    
    \noindent
 \textbf{Step 3.} Let us prove that $f$ is radial and decreasing.
 
    \noindent
Equality in \eqref{hardy2} reads, for almost every $t$,
\begin{equation*}
  \int_{W_t} f(x) \, dx =\int_0^{\sigma(t)} f^\ast (s) \, ds.
\end{equation*}
Moreover, for all $\tau\in [0, w_M)$, there exists a sequence $\Set{\tau_k}$ such that
\begin{enumerate}
    \item $\tau_k\to \tau$;
    \item $\tau_k>\tau_{k+1}$;
    \item $\displaystyle\int_{W_{\tau_k}} f(x) \, dx =\displaystyle\int_0^{\sigma(\tau_k)} f^\ast (s) \, ds$, 
\end{enumerate}
and, by the continuity of $\sigma(\cdot)$, we have

\begin{equation*}
    \int_0^{\sigma(\tau)} f^\ast (s) \, ds= \lim_{k}  \int_0^{\sigma(\tau_k)} f^\ast (s)=\lim_k \int_{W_{\tau_k}} f(x) \, dx =\int_{W_\tau} f(x)\;dx.
\end{equation*}
By Lemma \ref{alvino}, we have that for all $\tau$, there exists $\alpha_\tau$ such that 
$$\{w>\tau\}=\{f>\alpha_\tau\}.$$ Consequently, we have that also $f$ is radial and decreasing, so $f=f^\sharp$.
\end{proof}

As a direct consequence of Lemma \ref{dirichlet_lemma}, we obtain the rigidity for the $p-$Laplace operator with Dirichlet boundary conditions. 

\begin{corollario}\label{cordir}
Let $\Omega\subset\R^n$ be an open, bounded and Lipschitz set. Let $f\in L^{p'}(\Omega)$ be a positive function and  let $w$ and $z$ be  weak solutions respectively to
\begin{equation}
        \begin{cases}
-\Delta_p w= f & \text{ in } \Omega \\
w =0  & \text{ on } \partial \Omega,
\end{cases}
\quad 
\begin{cases}
-\Delta_p z= f^{\sharp} & \text{ in } \Omega^\sharp \\
z =0  & \text{ on } \partial \Omega^\sharp.
\end{cases}
\end{equation}
If $w^\sharp(x)=z(x)$, for all $x\in \Omega^\sharp$, then there exists $x_0\in \R^n$ such that
\begin{equation*}
    \Omega=\Omega^\sharp +x_0, \qquad w(\cdot+x_0)=z(\cdot), \qquad f(\cdot+ x_0)=f^\sharp(\cdot).
\end{equation*}
\end{corollario}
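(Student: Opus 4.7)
The plan is to reduce the statement to Lemma \ref{dirichlet_lemma}. That lemma takes as hypothesis the ordinary differential equation \eqref{talentisigma} on the distribution function of a Dirichlet solution, so my only task is to verify that, under the assumption $w^\sharp \equiv z$, the distribution function of $w$ satisfies this equation with equality; the geometric and symmetry conclusions will then follow from Lemma \ref{dirichlet_lemma} directly.

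First, I would denote by $\sigma$ the distribution function of $w$ and by $\tau$ the distribution function of $z$. Since $w$ and $w^\sharp$ are equi-distributed and $w^\sharp(x) = z(x)$ on $\Omega^\sharp$ by hypothesis, one obtains at once $\sigma \equiv \tau$. Next, I would repeat the chain of inequalities \eqref{fmagg}--\eqref{hardy2} used in the proof of Lemma \ref{dirichlet_lemma} but applied to the radial solution $z$ of the symmetrized Dirichlet problem. Because $f^\sharp$ is radially decreasing and $\Omega^\sharp$ is a ball, $z$ is radially symmetric, its superlevel sets are concentric balls, and $|\nabla z|$ is constant on each such sphere; consequently the isoperimetric inequality, the H\"older inequality, and the Hardy--Littlewood inequality all become equalities, so that
\[
\gamma_n \, \tau(t)^{\left(1-\frac{1}{n}\right)\frac{p}{p-1}} = \left(\int_0^{\tau(t)} f^\ast(s)\, ds\right)^{\frac{1}{p-1}} \bigl(-\tau'(t)\bigr) \quad \text{for a.e.\ } t \in [0, w_M],
\]
where we also used $(f^\sharp)^\ast = f^\ast$. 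Via the identification $\sigma \equiv \tau$, this is precisely the condition \eqref{talentisigma} for $\sigma$.

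Applying Lemma \ref{dirichlet_lemma} to $w$ then provides $x_0 \in \R^n$ such that $\Omega = \Omega^\sharp + x_0$, $w(\cdot + x_0) = w^\sharp(\cdot) = z(\cdot)$, and $f(\cdot + x_0) = f^\sharp(\cdot)$, which is the desired conclusion. I do not foresee any genuine obstacle here: all the substantive work (the three-step extraction of radial symmetry of the domain, of the solution via a P\'olya--Szeg\H{o} rigidity argument, and of the datum via Lemma \ref{alvino}) is already concentrated in Lemma \ref{dirichlet_lemma}, and the role of this corollary is simply to translate the pointwise equality $w^\sharp = z$ into the ODE hypothesis required by that lemma.
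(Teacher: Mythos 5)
Your proof is correct and takes essentially the same route as the paper: both reduce the corollary to Lemma \ref{dirichlet_lemma} by verifying hypothesis \eqref{talentisigma} for the distribution function of $w$. The paper does this by integrating the differential inequality for $\sigma$ to obtain $w^*\le z^*$, then using $w^\sharp=z$ to force equality and hence equality a.e.\ in the integrand; you instead observe directly that $\sigma\equiv\tau$ and that $\tau$ satisfies \eqref{talentisigma} with equality because $z$ is radial, which is a slightly more streamlined reorganization of the same two ingredients (equidistribution of $w$ and $z$, and the exact ODE for the radial Dirichlet solution) and sidesteps the ``equality in an integral inequality implies a.e.\ equality of integrands'' step.
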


\begin{proof}
    From the proof of Lemma \ref{dirichlet_lemma}, it follows that the distribution function of $w$, denoted by $\sigma$, satisfies

    \begin{equation}\label{fata}
        n \omega_n^{\frac{1}{n}}  \sigma(t)^{\frac{n-1}{n}}\leq \left(\int_0^{\sigma(t)} f^\ast (s) \, ds\right)^{\frac{1}{p}} \left( -\sigma'(t) \right)^{1-\frac{1}{p}}.
    \end{equation}
    Now, we integrate \eqref{fata} from $0$ to $t$, obtaining

$$u^\ast(t) =\int_{\sigma(t)}^{\abs{\Omega}} \frac{1}{\gamma_n} \left(\int_0^s f^\star(r)\, dr\right)^{1/(p-1)}\frac{1}{s^{(1-1/n)(p/(p-1))}} \, ds= z^\ast(t).$$
So, if $w^\sharp=z$,  we have $w^\ast=z^\ast$, and consequently we obtain equality in \eqref{fata} for almost every $t\in [0, w_M]$. We can conclude by applying Lemma \ref{dirichlet_lemma}.
    
\end{proof}

Now, using Lemma \ref{dirichlet_lemma}, we are in position to conclude the proof of the main Theorem. 
\begin{prop} \label{teorema1} Let $\Omega\subset \mathbb{R}^n$ be  an open, bounded and Lipschitz set and let $\Omega^\sharp$ be the ball with the same measure as $\Omega$.
Let $u$ be the solution to \eqref{rob1} and let $\mu$ be its distribution function. If equality holds in \eqref{talentimu}, then there exists $x_0\in \R^n$ such that

\begin{equation*}
    \Omega=\Omega^\sharp +x_0, \qquad u(\cdot+x_0)=v(\cdot), \qquad f(\cdot+ x_0)=f^\sharp(\cdot).
\end{equation*}

\end{prop}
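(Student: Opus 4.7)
The approach is to reduce Proposition~\ref{teorema1} to the Dirichlet rigidity of Lemma~\ref{dirichlet_lemma}. First, I would extract the consequences of equality in \eqref{talentimu}. Since the derivation in Lemma~\ref{key} proceeds through the chain \eqref{isop}--\eqref{hardy}, pointwise equality in \eqref{talentimu} for almost every $t$ forces equality at each step: (a) isoperimetric equality, so $U_t$ is a ball; (b) H\"older equality, so the function $g$ of \eqref{fung} is constant on $\partial U_t$; (c) Hardy--Littlewood equality, so $\int_{U_t} f\,dx = \int_0^{\mu(t)} f^{*}(s)\,ds$. The monotonicity argument from Step~1 of the proof of Lemma~\ref{dirichlet_lemma} then upgrades (a) to every $t\in[0,u_M)$, so in particular $\Omega = U_0$ is a ball.

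The critical step is to show that $u$ is constant on $\partial\Omega$. Assuming $u_m > 0$, for almost every $t \in (0, u_m)$ one has $U_t = \Omega$, hence $\partial U_t = \partial U_t^{\mathrm{ext}} = \partial\Omega$; consequence (b) in this regime says that $g = \beta u^{p-1}$ is constant on $\partial\Omega$, which forces $u$ to be constant on $\partial\Omega$. Since the minimum $u_m$ is attained on $\partial\Omega$ by the strong maximum principle, that constant must equal $u_m$. (If $u_m = 0$, the interval $(0, u_m)$ is empty, and one must instead combine the constancy of $u$ on $\partial U_t^{\mathrm{ext}}$ for almost every $t > 0$ with a Hopf-type boundary analysis---ruling out $u$ vanishing on a positive $\mathcal{H}^{n-1}$-measure subset of $\partial\Omega$---to reach the same conclusion in the trace sense.)

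Once $u \equiv u_m$ on $\partial\Omega$ is known, the function $w := u - u_m$ lies in $W^{1,p}_0(\Omega)$, is non-negative, and solves the Dirichlet problem $-\Delta_p w = f$ in $\Omega$ with $w = 0$ on $\partial\Omega$. Its distribution function is $\sigma(\tau) = \mu(\tau + u_m)$, and for every $\tau > 0$ we have $\partial U_{\tau + u_m}^{\mathrm{ext}} = \emptyset$ (since $u = u_m < \tau + u_m$ on $\partial\Omega$). Therefore equality in \eqref{talentimu} at $t = \tau + u_m$ reduces exactly to the hypothesis \eqref{talentisigma} of Lemma~\ref{dirichlet_lemma}. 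Applying the lemma to $w$ yields $x_0 \in \mathbb{R}^n$ such that $\Omega = \Omega^\sharp + x_0$, $w(\cdot + x_0) = w^\sharp$, and $f(\cdot + x_0) = f^\sharp$. Setting $\tilde u(x) := u(x + x_0) = w^\sharp(x) + u_m$, this $\tilde u$ is radial and radially decreasing on $\Omega^\sharp$, solves $-\Delta_p \tilde u = f^\sharp$, and inherits the Robin boundary condition on $\partial\Omega^\sharp$; uniqueness of the solution to \eqref{rob2} then gives $\tilde u = v$.

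The main obstacle is the constancy of $u$ on $\partial\Omega$ in the second step above: once secured, the Dirichlet reduction and the invocation of Lemma~\ref{dirichlet_lemma} are essentially mechanical. Within that step, the sub-case $u_m = 0$ is the most delicate, since it requires careful use of trace theory and Hopf-type maximum principles to exclude pathological boundary vanishing of $u$.
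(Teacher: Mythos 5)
Your proposal is correct in its overall strategy and arrives at the same conclusion via the same three-stage reduction (superlevel sets are balls, boundary reduction, apply Lemma~\ref{dirichlet_lemma}), but you handle the central boundary reduction differently than the paper. You deduce $u\equiv u_m$ on $\partial\Omega$ from equality in the H\"older step \eqref{holder} applied on the range $t<u_m$ (where $\partial U_t=\partial\Omega$ and $g=\beta u^{p-1}$), whereas the paper proceeds purely geometrically: having shown that $\Omega$ and every $U_t$ are balls, it observes that for $u_m\le t<s\le u_M$ the spheres $\partial U_t$ and $\partial U_s$ are nested and hence meet in at most one point; in particular $\mathcal{H}^{n-1}\bigl(\partial U_t\cap\partial\Omega\bigr)=0$ for $t>u_m$, so the boundary integral in \eqref{talentimu} drops out and $w=u-u_m$ is a $W^{1,p}_0$ solution of the Dirichlet problem. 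The geometric route has the advantage of being uniform in $u_m$: it works verbatim when $u_m=0$, since it never appeals to the (possibly empty) interval $(0,u_m)$. Your H\"older argument, by contrast, genuinely needs $u_m>0$, and the parenthetical fallback you sketch for $u_m=0$ (trace theory plus a Hopf-type boundary lemma) is not worked out and would require regularity up to the boundary that the paper deliberately avoids assuming. It is worth noting that the geometric fact you yourself invoke one paragraph later, namely that $\partial U_{\tau+u_m}^{\mathrm{ext}}$ is $\mathcal{H}^{n-1}$-negligible, is exactly what already furnishes the boundary constancy of $u$ in \emph{all} cases; had you used it at the earlier point, the case split on $u_m$ would disappear and your proof would become equivalent to the paper's. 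One small imprecision: you assert $\partial U_{\tau+u_m}^{\mathrm{ext}}=\emptyset$, but a superlevel ball $U_{\tau+u_m}$ can be internally tangent to the ball $\Omega$, so this set is only $\mathcal{H}^{n-1}$-null (at most one point), which is all that is needed.
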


\begin{proof}
Firstly, we claim that the superlevel sets $\Set{u>t}$ are balls for every $t\in[0,u_M)$. Equality in \eqref{talentimu} implies the equality in \eqref{isop}, i.e.
$$ n \omega_n^{\frac{1}{n}}  \mu(t)^{\frac{n-1}{n}} = P(U_t), \quad \text{for a. e. } t\in[0,u_M]$$
that means that almost every superlevel set is a ball. Arguing as in Step $1$ of Lemma \ref{dirichlet_lemma}, we can conclude that every superlevel set is a ball, so, $\Om=\{u>u_m\}$ is a ball and we obtain that $\Om=x_0+\Om^\sharp$. 

Let us observe that for every $t,s\in [u_m,u_M]$ with $t<s$, as both $U_t$ and $U_s$ are balls, we have that $\partial U_t\cap\partial U_s$ contains at most one point. In particular, the function $w=u-u_m$ is a weak solution to the Dirichlet problem \eqref{dirichlet} in $\Omega$.

We claim that $\sigma(t)=\abs{\Set{w>t}}$ satisfies \eqref{talentisigma}. Since $\Set{w>t}=\Set{u>t+u_m}$, we have $\sigma(t)=\mu(t+u_m)$ for all $t\in [0, u_M-u_m]$.
Moreover, we have 
$$
\int_{\partial U_t} \frac{1}{u}\, d\mathcal{H}^{n-1}= 0, \quad \forall t>u_m
$$
So, using the fact that we have equality in \eqref{talentimu} by hypothesis, we get 
\begin{align*}
    \gamma_n \sigma(t)^{\left(1-\frac{1}{n}\right)\frac{p}{p-1}} &= \gamma_n \mu(t+u_m)^{\left(1-\frac{1}{n}\right)\frac{p}{p-1}} \\
    & = \left(\int_0^{\mu (t+u_m)}f^\ast (s ) \, ds\right)^{\frac{1}{p-1}} \left( - \mu'(t+u_m) + \frac{1}{\beta ^{\frac{1}{p-1}}}\int_{\partial U_{t+u_m}^\text{ext}} \frac{1}{u} \, d\mathcal{H}^{n-1}(x)\right)\\
    &=\left(\int_0^{\sigma (t)}f^\ast (s ) \, ds\right)^{\frac{1}{p-1}} \left( - \sigma'(t) \right),
	\end{align*}

for all $t\in(0, u_M-u_m)$. So, we can conclude by Lemma \ref{dirichlet_lemma}.
\end{proof}
We conclude now with the proof of the main Theorem. 
\begin{proof}[Proof of Theorem \ref{th1}]
From Proposition \ref{thnorme}, we have that the hypothesis of Theorem \ref{th1}
\begin{equation*}
    \norma{u}_{L^{pk,p}(\Omega)}=\norma{v}_{L^{pk,p}(\Omega^\sharp)}, \quad \text{for some }  k\in \left]0, \frac{n(p-1)}{(n-2)p+n}\right]
\end{equation*}
implies the following equality for almost every $t\in (0,u_M)$
	\begin{equation*} 
\gamma_n \mu(t)^{\left(1-\frac{1}{n}\right)\frac{p}{p-1}} =\left(\int_0^{\mu (t)}f^\ast (s ) \, ds\right)^{\frac{1}{p-1}} \left( - \mu'(t) + \frac{1}{\beta ^{\frac{1}{p-1}}}\int_{\partial U_t^\text{ext}} \frac{1}{u} \, d\mathcal{H}^{n-1}(x)\right), 
\end{equation*}
where $\mu(t)$ is the distribution function of $u$. 

Now, we are in position to apply Proposition \ref{teorema1}, and, so, 
there exists $x_0\in \R^n$ such that
\begin{equation*}
    \Omega=\Omega^\sharp +x_0, \qquad u(\cdot+x_0)=v(\cdot), \qquad f(\cdot+ x_0)=f^\sharp(\cdot).
\end{equation*}

\end{proof}

\section{Remarks and open problems}

\begin{oss}\label{osser}
In \cite{AGM} the authors also prove that in the case $f\equiv 1$, it holds
		\begin{equation}
		\norma{u}_{L^{pk,p}(\Omega)} \leq \norma{v}_{L^{pk,p}(\Omega^{\sharp})}, \quad\quad {\rm if} \:\; \displaystyle{0 <k \leq \frac{n(p-1)}{n(p-1)-p}}.
		\end{equation} 
		We stress that the proof of Theorem \ref{th1} can be adapted to case $f\equiv 1$, regardless of the fact that now the admissible $k$ varies in a wider range.
\end{oss}


\begin{open}
\begin{itemize} Below we present a list of open problems and work in progress.

    \item Generalize the rigidity results in the anisotropic setting, starting from the comparison proved in \cite{San2}.
    \item Generalize the rigidity results to other problems, such as the ones investigated in \cite{ACNT}, \cite{nunzia2022sharp}.
\end{itemize}
\end{open}


\section*{Acknowledgements}
 The authors Alba Lia Masiello and Gloria Paoli are supported by GNAMPA of INdAM. 
The author  Gloria Paoli is supported by the Alexander von Humboldt Foundation with an Alexander von Humboldt research fellowship.

\addcontentsline{toc}{chapter}{Bibliografia}
\bibliographystyle{plain}
\bibliography{biblio}

\begin{thebibliography}{10}

\bibitem{ACNT}
A.~Alvino, F.~Chiacchio, C.~Nitsch, and C.~Trombetti.
\newblock Sharp estimates for solutions to elliptic problems with mixed
  boundary conditions.
\newblock {\em J. Math. Pures Appl.}, 152:251—261, 2021.

\bibitem{AFLT}
A.~Alvino, V.~Ferone, G.~Trombetti, and P.-L. Lions.
\newblock Convex symmetrization and applications.
\newblock {\em Ann. Inst. H. Poincar\'{e} C Anal. Non Lin\'{e}aire},
  14(2):275--293, 1997.

\bibitem{lions_remark}
A.~Alvino, P.-L. Lions, and G.~Trombetti.
\newblock A remark on comparison results via symmetrization.
\newblock {\em Proc. Roy. Soc. Edinburgh Sect. A}, 102(1-2):37--48, 1986.

\bibitem{ALT}
A.~Alvino, P.-L. Lions, and G.~Trombetti.
\newblock Comparison results for elliptic and parabolic equations via {S}chwarz
  symmetrization.
\newblock {\em Ann. Inst. H. Poincar\'{e} Anal. Non Lin\'{e}aire}, 7(2):37--65,
  1990.

\bibitem{ANT}
A.~Alvino, C.~Nitsch, and C.~Trombetti.
\newblock A {T}alenti comparison result for solutions to elliptic problems with
  {R}obin boundary conditions.
\newblock {\em to appear on Comm. Pure Appl. Math.}

\bibitem{AGM}
V.~Amato, A.~Gentile, and A.~L. Masiello.
\newblock Comparison results for solutions to {$p$}-{L}aplace equations with
  {R}obin boundary conditions.
\newblock {\em Ann. Mat. Pura Appl. (4)}, 201(3):1189--1212, 2022.

\bibitem{ambrosio2000functions}
L.~Ambrosio, N.~Fusco, and D.~Pallara.
\newblock {\em Functions of bounded variation and free discontinuity problems}.
\newblock Oxford Mathematical Monographs. The Clarendon Press, Oxford
  University Press, New York, 2000.

\bibitem{AroTal}
G.~Aronsson and G.~Talenti.
\newblock Estimating the integral of a function in terms of a distribution
  function of its gradient.
\newblock {\em Boll. Un. Mat. Ital. B (5)}, 18(3):885--894, 1981.

\bibitem{AB}
M.~S. Ashbaugh and R.~D. Benguria.
\newblock On {R}ayleigh's conjecture for the clamped plate and its
  generalization to three dimensions.
\newblock In {\em Differential equations and mathematical physics
  ({B}irmingham, {AL}, 1994)}, pages 17--27. Int. Press, Boston, MA, 1995.

\bibitem{Brothers1988}
J.~E. Brothers and W.~P. Ziemer.
\newblock Minimal rearrangements of {S}obolev functions.
\newblock {\em J. Reine Angew. Math.}, 384:153--179, 1988.

\bibitem{burago}
Y.~D. Burago and V.~A. Zalgaller.
\newblock {\em Geometric inequalities}, volume 285 of {\em Grundlehren der
  Mathematischen Wissenschaften [Fundamental Principles of Mathematical
  Sciences]}.
\newblock Springer-Verlag, Berlin, 1988.
\newblock Translated from the Russian by A. B. Sosinski\u{\i}, Springer Series
  in Soviet Mathematics.

\bibitem{chavel}
I.~Chavel.
\newblock {\em Isoperimetric inequalities}, volume 145 of {\em Cambridge Tracts
  in Mathematics}.
\newblock Cambridge University Press, Cambridge, 2001.
\newblock Differential geometric and analytic perspectives.

\bibitem{nunzia2022sharp}
F.~Chiacchio, N.~Gavitone, C.~Nitsch, and C.~Trombetti.
\newblock Sharp estimates for the gaussian torsional rigidity with {R}obin
  boundary conditions.
\newblock {\em Potential Analysis}, pages 1--10, 2022.

\bibitem{fusco}
A.~Cianchi and N.~Fusco.
\newblock Steiner symmetric extremals in {P}\'{o}lya-{S}zeg\"{o} type
  inequalities.
\newblock {\em Adv. Math.}, 203(2):673--728, 2006.

\bibitem{degiorgi}
E.~De~Giorgi.
\newblock Sulla propriet\`a isoperimetrica dell'ipersfera, nella classe degli
  insiemi aventi frontiera orientata di misura finita.
\newblock {\em Atti Accad. Naz. Lincei Mem. Cl. Sci. Fis. Mat. Natur. Sez. Ia
  (8)}, 5:33--44, 1958.

\bibitem{FerVol}
A.~Ferone and R.~Volpicelli.
\newblock Minimal rearrangements of {S}obolev functions: a new proof.
\newblock {\em Ann. Inst. H. Poincar\'{e} C Anal. Non Lin\'{e}aire},
  20(2):333--339, 2003.

\bibitem{posteraro}
V.~Ferone and M.~R. Posteraro.
\newblock A remark on a comparison theorem.
\newblock {\em Comm. Partial Differential Equations}, 16(8-9):1255--1262, 1991.

\bibitem{kesavan}
S.~Kesavan.
\newblock On a comparison theorem via symmetrisation.
\newblock {\em Proc. Roy. Soc. Edinburgh Sect. A}, 119(1-2):159--167, 1991.

\bibitem{kes}
S.~Kesavan.
\newblock {\em Symmetrization \& applications}, volume~3 of {\em Series in
  Analysis}.
\newblock World Scientific Publishing Co. Pte. Ltd., Hackensack, NJ, 2006.

\bibitem{linp}
P.~Lindqvist.
\newblock On the definition and properties of {$p$}-superharmonic functions.
\newblock {\em J. Reine Angew. Math.}, 365:67--79, 1986.

\bibitem{NOI}
A.~L. Masiello and G.~Paoli.
\newblock A rigidity result for the robin torsion problem.
\newblock {\em arXiv preprint arXiv:2209.06706}, 2022.

\bibitem{ossy}
R.~Osserman.
\newblock The isoperimetric inequality.
\newblock {\em Bull. Amer. Math. Soc.}, 84(6):1182--1238, 1978.

\bibitem{pol}
G.~P\'{o}lya and G.~Szeg\"{o}.
\newblock {\em Isoperimetric {I}nequalities in {M}athematical {P}hysics}.
\newblock Annals of Mathematics Studies, No. 27. Princeton University Press,
  Princeton, N. J., 1951.

\bibitem{San2}
R.~Sannipoli.
\newblock Comparison results for solutions to the anisotropic {L}aplacian with
  {R}obin boundary conditions.
\newblock {\em Nonlinear Anal.}, 214:Paper No. 112615, 21, 2022.

\bibitem{T}
G.~Talenti.
\newblock Elliptic equations and rearrangements.
\newblock {\em Ann. Scuola Norm. Sup. Pisa Cl. Sci. (4)}, 3(4):697--718, 1976.

\bibitem{T2}
G.~Talenti.
\newblock Nonlinear elliptic equations, rearrangements of functions and
  {O}rlicz spaces.
\newblock {\em Ann. Mat. Pura Appl. (4)}, 120:160--184, 1979.

\bibitem{talenti}
G.~Talenti.
\newblock The standard isoperimetric theorem.
\newblock In {\em Handbook of convex geometry, {V}ol. {A}, {B}}, pages 73--123.
  North-Holland, Amsterdam, 1993.

\bibitem{T3}
G.~Talenti.
\newblock Inequalities in rearrangement invariant function spaces.
\newblock In {\em Nonlinear analysis, function spaces and applications, {V}ol.
  5 ({P}rague, 1994)}, pages 177--230. Prometheus, Prague, 1994.

\bibitem{V}
J.~L. V\'{a}zquez.
\newblock A strong maximum principle for some quasilinear elliptic equations.
\newblock {\em Appl. Math. Optim.}, 12(3):191--202, 1984.

\end{thebibliography}

\end{document}